\documentclass[a4paper,11pt]{article}
\usepackage{graphicx}
\usepackage{amsmath,amsthm,amssymb,enumerate,multirow,stmaryrd}
\usepackage{euscript,mathrsfs}
\usepackage{dsfont}
\usepackage{xcolor}
\usepackage{empheq}
\usepackage{geometry}

\bibliographystyle{plain}

\geometry{verbose,lmargin=1.8cm,rmargin=1.8cm}

\usepackage{bm}
\usepackage{bbm}
\usepackage[linkcolor=blue,colorlinks=true]{hyperref}
\catcode`\@=11 \@addtoreset{equation}{section}

\catcode`\@=12

\usepackage{stmaryrd}
\allowdisplaybreaks

\usepackage[labelformat=simple]{subcaption}

\usepackage{enumitem}
\usepackage{url}

\usepackage{tikz}
\usetikzlibrary{patterns}
\usepackage[normalem]{ulem}
\usepackage{cancel}

\newtheorem{Theorem}{Theorem}[section]

\newtheorem{Lemma}[Theorem]{Lemma}
\newtheorem{Corollary}[Theorem]{Corollary}

\newtheorem{Definition}[Theorem]{Definition}

\newtheorem{Remark}[Theorem]{Remark}
\newcommand{\bRemark}[1]{
			\begin{Remark} \label{R#1} }
\newcommand{\eR}{\end{Remark}}

\newcommand{\ith}{i^{\rm th}}

\newcommand{\ve}{\bm{e}}

\newcommand{\tor}{\mathbb{T}^d}

\newcommand{\Piq}{\Pi_Q}

\newcommand{\ds}{\,{\rm d}S_x}

\newcommand{\grid}{{\cal T}_h}

\newcommand{\TS}{\Delta t}

\newcommand{\Divh}{{\rm div}_h}
\newcommand{\Gradh}{\nabla_h}

\newcommand{\Gradedge}{\nabla_\faces}

\newcommand{\co}[2]{{\rm co}\{ #1 , #2 \}}
\newcommand{\Ov}[1]{\overline{ #1 } }
\newcommand{\avs}[1]{\left\{\hspace{-3pt}\left\{ #1 \right\} \hspace{-3pt}\right\} }

\newcommand{\aleq}{\stackrel{<}{\sim}}

\newcommand{\vn}{\bm{n}}

\newcommand{\vc}[1]{{\bm #1}}

\newcommand{\Div}{{\rm div}_x}
\newcommand{\Grad}{\nabla_x}

\newcommand{\dx}{\,{\rm d} {x}}

\newcommand{\dt}{\,{\rm d} t }
\newcommand{\dxdt}{\,{\rm d} {x}{\rm d} t }

\newcommand{\jump}[1]{\left\llbracket#1\right\rrbracket}
\newcommand{\abs}[1]{| #1|}
\newcommand{\Abs}[1]{\left| #1 \right|}

\newcommand{\norm}[1]{\left\lVert#1\right\rVert}

\newcommand{\intTd}[1]{\int_{\tor} #1 \dx}

\newcommand{\intfacesint}[1]{\int_{\facesint}{ #1 \ds} }
\newcommand{\intTaufacesint}[1]{\int_0^{\tau}\int_{\facesint}{ #1 \ds} {\rm d} t }
\newcommand{\intfacesintB}[1]{\int_{\facesint}{\Big( #1 \Big) \ds}}

\newcommand{\intK}[1] {\int_{K} #1 \dx }

\newcommand{\intTauTd}[1]{ \int_0^\tau \int_{\tor} #1 \dxdt}
\newcommand{\intTauTdB}[1]{  \int_0^\tau \int_{\tor} \left( #1 \right) \dxdt}

\newcommand{\R}{\mathbb{R}}
\newcommand{\I}{\mathbb{I}}

\newcommand{\intTor}[1]{\int_{\tor} #1 \ \dx}

\def\softd{{\leavevmode\setbox1=\hbox{d}%
          \hbox to 1.05\wd1{d\kern-0.4ex{\char039}\hss}}}

\newcommand{\bdot}{\boldsymbol{\cdot}}

\definecolor{Cgrey}{rgb}{0.85,0.85,0.85}
\definecolor{Cblue}{rgb}{0.50,0.85,0.85}
\definecolor{Cred}{rgb}{1,0,0}
\definecolor{fancy}{rgb}{0.10,0.85,0.10}
\definecolor{forestgreen}{rgb}{0.13, 0.55, 0.13}
\definecolor{pinegreen}{rgb}{0.0, 0.47, 0.44}
\newcommand{\cred}{\color{red}}
\newcommand{\cblue}{\color{blue}}


\date{}
\newcommand{\pd}{\partial}

\newcommand{\faces}{\mathcal{E}}
\newcommand{\facesi}{\faces _i}
\newcommand{\facesK}{\faces(K)}
\newcommand{\facesKi}{\faces _i(K)}
\newcommand{\facesKj}{\faces _j(K)}
\newcommand{\facesint}{\faces}


\newcommand{\br}{ \nonumber \\ }
\newcommand{\character}{\mathds{1}}

\allowdisplaybreaks

\newcommand{\RevOne}[1]{{#1}}
\newcommand{\RevTwo}[1]{{#1}}

\usepackage{CJKutf8} \newcommand{\chinese}[1]{\begin{CJK*}{UTF8}{gbsn}\! #1 \end{CJK*}}


\begin{document}

\title{Convergence of a generalized Riemann problem scheme \\ for the Burgers equation}
\author{
M\'aria Luk\'a\v{c}ov\'a-Medvi\softd ov\'a$^{*}$
\and Yuhuan Yuan$^{\dagger,*}$
}


\maketitle

\centerline{$^*$ Institute of Mathematics, Johannes Gutenberg-University Mainz}
\centerline{Staudingerweg 9, 55 128 Mainz, Germany}
\centerline{lukacova@uni-mainz.de}

\medskip
\centerline{$^\dagger$School of Mathematics, Nanjing University of Aeronautics and Astronautics}
\centerline{Jiangjun Avenue No. 29, 211106 Nanjing, P. R. China}
\centerline{yuhuanyuan@nuaa.edu.cn}

%
%
%
%
%

\medskip

\begin{abstract}
In this paper we study the convergence of a second order finite volume approximation of the scalar conservation law. This scheme is based on the generalized Riemann problem (GRP) solver.
We firstly investigate the stability of the GRP scheme and find that it might be entropy unstable when the shock wave is generated.
By adding an artificial viscosity we propose a new stabilized GRP scheme. Under the assumption that numerical solutions are uniformly bounded, we prove consistency and  convergence of this new GRP method.
\end{abstract}

{\bf Keywords:} scalar conservation law, finite volume method, generalized Riemann problem solver, entropy stability, consistency, convergence


\section{Introduction}
We consider a scalar conservation law on a bounded periodic domain $\tor$
\begin{align}\label{PDE}
& \pd_t u +    \Div \vc{f}( u) = 0, \quad (t,x) \in (0,T) \times \tor, \quad d = 2,3 
\end{align}
subject to initial data
\begin{equation}\label{In}
u(0,x) = u_0 \quad \mbox{satisfying} \quad u_0 \in L^\infty(\tor;\R).
\end{equation}
Since discontinuities (shock waves) may develop after finite time even with smooth initial data,
we further require that the problem \eqref{PDE} is considered in a distributional sense and admits a mathematical entropy pair \RevTwo{$(\eta, \vc{q})$} such that $\eta(u)$ is a convex function of $u$ and the following entropy inequality holds in the distributional sense
\begin{align}\label{En}
& \pd_{t} \eta(u) + \Div \vc{q}(u) \leq 0.
\end{align}
\RevTwo{
The  entropy inequality \eqref{En} is a selection criterion that rules out nonphysical solutions. It is well-know that
for scalar multidimensional conservation laws unique global-in-time weak entropy solution exists~\cite{Kruzkov:1970}.  Similarly,  one-dimensional systems of hyperbolic conservation laws \cite{Bressan-Crasta-Piccoli:2000,Bressan-Lewicka:2000} are well-posed in the class of weak entropy solutions.} 
%
%
However, for multidimensional systems of hyperbolic conservation laws, there might exist infinitely many weak entropy solutions, see  \cite{DeLellis-Szekelyhidi:2010} for the incompressible Euler system and \cite{Chiodaroli-DeLellis-Kreml:2015,Feireisl-Klingenberg-Kreml-Markfelder:2020,Chiodaroli-Kreml-Macha-Schwarzacher:2021} for the compressible barotropic and full Euler systems.

In this paper we consider a second order finite volume scheme based on the generalized Riemann problem solver and show its 
convergence of  to a weak entropy solution of \eqref{PDE}-\eqref{En}.
Since the GRP scheme deeply relies on the problem considered,  hereafter we perform the analysis for the Burgers equation, i.e. \eqref{PDE}-\eqref{En} with
\begin{align}\label{Burgers-1}
&  \vc{f}(u) = f(u) \,  \I_{d\times 1} ,\quad   f(u) = \frac{u^2}{2}, \\
&  \eta(u) =  \frac{u^2}{2},  \quad \vc{q}(u) = q(u) \, \Bbb{I}_{d\times 1}, \quad  q(u) = \frac{u^3}{3}, \label{Burgers-2}
\end{align}
\RevTwo{where $\Bbb{I}_{d\times 1}$ is a $d\times 1$ matrix of ones.}
We point out that our analysis can be extended to general scalar hyperbolic conservation laws.
\RevTwo{
In contrast to the proof of the uniqueness due to Kru\v{z}kov \cite{Kruzkov:1970} our convergence analysis requires only to work with one entropy function \eqref{Burgers-2}.}
\RevOne{
We also note that our analysis is not restricted to a particular choice of limiter though a special sign condition is required \cite{Fjordholm3}. Moreover, the convergence results also hold for a piecewise constant version.
}

In the literature we can find several results on the convergence analysis of numerical schemes for scalar hyperbolic conservation laws, see, e.g.,
\cite{Kroner-Rokyta:1994, Kroner-Noelle-Rokyta:1995}, where the concept of measure-valued solutions and DiPerna's  result \cite{DiPerna:1985a} on the characterisation of the Young measure as a Dirac measure acting on a unique weak entropy solution is applied. Our convergence result presented in this paper can be seen as an extension of the above mentioned results to a truly multidimensional second order finite volume method based on the GRP solver. More precisely, we derive a new stabilized GRP scheme for the Burgers equation \eqref{PDE}-\eqref{Burgers-2} that satisfies the discrete entropy inequality and then prove its convergence to a unique weak entropy solution. In this context we refer to  \cite{BenArtzi-Falcovitz-Li:2009, BenArtzi-Li:2021} where the \RevOne{ Lax-Wendroff-type} theorem was proved for the second order finite volume method based on the GRP solver, however under an additional assumption controlling the growth of the total variation in time. In contrary to the latter results, we only use the  discrete entropy inequality that implies suitable weak BV estimates allowing us to show the  convergence of the GRP scheme.

We close the introductory section by mentioning the recent results on convergence of numerical methods for  multidimensional systems of hyperbolic conservation laws. Due to their ill-posedness in the class of weak entropy solutions, the concept of  generalized solutions, such as  measure-valued or dissipative (measure-valued) solutions have been used in the convergence analysis,
see  \cite{Fjordholm1, Fjordholm2, FLM1, FLM2, Lukacova-Yuan} as well as the monograph \cite{Feireisl-Lukacova-Mizerova-She:2021}. For the error analysis between a numerical solution and an exact strong solution of a multidimensional system of hyperbolic conservation laws we refer to \cite{Cances_Mathis_Seguin:2016, Jovanovic-Rohde:2006} and \cite{LSY}.

The rest of the paper is organized as follows.
Section~\ref{Num} introduces the notations and the GRP scheme.  In Section~\ref{Stability}~we show that this numerical scheme is not entropy stable, cf. Lemma~\ref{es-D}. By adding an artificial viscosity we propose a new stabilized GRP method and show that it satisfies the discrete entropy inequality.
Section~\ref{Convergence} is the heart of the paper presenting the main results: the consistency, the {\it weak} convergence to a weak solution and the {\it strong} convergence to a strong solution on its lifespan.

\section{Numerical method} \label{Num}
In this section we introduce a second order finite volume approximation originally proposed by Ben-Artzi, Li and Warnecke \cite{BenArtzi-Li-Warnecke:2006}, whose numerical flux is obtained by means of a local generalized Riemann problem solver.
See also \cite{BenArtzi-Li:2007,Li-Du:2016} for more details.
\subsection{Notations}
\paragraph{Mesh.}  Let $\grid$ be a uniform structured mesh with the mesh size $h$ formed by squares in 2D, and cubes in 3D such that $\tor = \bigcup_{K \in \grid} K, \, d = 2,3$.

We denote by $\faces$ the set of all faces of $\grid$, and by $\facesi$ the set of all faces that are orthogonal to the $\ith$ basis vector $\ve_i$ of the canonical system.   The set of all faces of an element $K$ is denoted by $\facesK$ and the set of the faces in the $\ith$-direction is defined by $\facesKi$.
We write $\sigma= K|L$ if $\sigma \in \faces$ is the common face of elements $K$ and $L$.
Further, we denote by $x_K$ and $|K|=h^d$ (resp. $x_\sigma$ and $|\sigma|=h^{d-1}$) the center and the Lebesgue measure of an element $K\in\grid$ (resp. a face $\sigma \in \faces$), respectively.
And $\vn$ is the outer normal vector to $\sigma$, see Figure \ref{figmesh}.

In what follows we use the abbreviated notations for integrals
\begin{align} \label{Int}
\intTor{\bdot} := \sum_{K\in\grid}\intK{\bdot} \quad \mbox{and} \quad \intfacesint{\bdot}:=\sum_{\sigma\in\facesint} \, \int_{\sigma}\bdot \dx .
\end{align}
In addition, we introduce the notation $A \aleq B$ if there exists a generic constant $C > 0$ independent on the mesh size $h$, such that $A \leq C\cdot B$ for $A,B\in \mathbb{R}$.
And we use the notation $A\approx B$  for $A \aleq B, B \aleq A$.

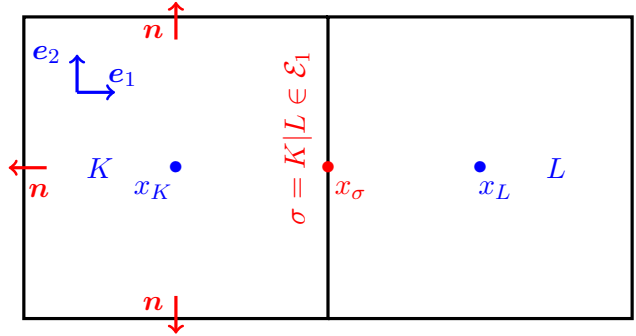
\begin{figure}[hbt]
\centering
\begin{tikzpicture}[scale=1.0]
\draw[-,very thick](0,-2)--(4,-2)--(4,2)--(0,2)--(0,-2)--(-4,-2)--(-4,2)--(0,2);
\draw[->,very thick, red = 90!] (-3.7,0)--(-4.2,0);
\path node at (-3.8,-0.3) {$\cred \vn$};
\draw[->,very thick, red = 90!] (-2,-1.7)--(-2,-2.2);
\path node at (-2.3,-1.8) {$\cred \vn$};
\draw[->,very thick, red = 90!] (-2,1.7)--(-2,2.2);
\path node at (-2.3, 1.8) {$\cred \vn$};

\draw[->,very thick, blue = 90!] (-3.3,1)--(-2.8,1);
\path node at (-2.7,1.2) {$\cblue \ve_1$};
\draw[->,very thick, blue = 90!] (-3.3,1)--(-3.3,1.5);
\path node at (-3.7,1.5) {$\cblue \ve_2$};

\path node at (-3.0,0) { $\cblue K$};
\path node at (3.0,0)  { $\cblue L$};
\path node at (-2.0,0) {$\cblue \bullet$};
\path node at (-2.3,-0.3) {$\cblue x_K$};
\path node at (2.0,0) {$\cblue \bullet$};
\path node at (2.2,-0.3) {$\cblue x_L$};
\path node at (0,0) {$\cred \bullet$};
\path node at (0.3,-0.3) {$\cred x_\sigma$};

\path (-0.4,0.4) node[rotate=90] { $\cred \sigma=K|L \in \faces_1$};
\end{tikzpicture}
\caption{ Structured mesh with $d = 2$.}\label{figmesh}
\end{figure}

 \paragraph{Function space.}
Let $P_n(K)$ \RevTwo{denote the set} of polynomials up to $n$-th degree on an element $K$, $n \in \mathbb{N}$.
We introduce the function space consisting of piecewise constant functions
\begin{equation}\label{Qh}
Q_h = \{ \phi \in L^1(\tor) \, \big| \,   \phi|_K \in P_0(K) \ \mbox{ for all } \ K \in \mathcal{T}_h \}
\end{equation}
and piecewise linear functions
\begin{equation}
V_h =\{ \phi \in L^2(\tor) \, \big| \,   \phi|_K \in P_1(K) \ \mbox{ for all } \ K \in \mathcal{T}_h \}.
\end{equation}
The notation $Q_h^m$ and $V_h^m$ stand for the corresponding $m$-dimensional vector function spaces, $m \in \mathbb{N}$.
The standard projection operator associated to $Q_h$ is defined as
\begin{equation}\label{Piq}
\Piq: L^1(\tor) \to Q_h, \quad \Piq  \phi (x) = \sum_{K \in \grid}  \frac{\character_{K}(x)}{|K|} \int_K \phi \dx,
\end{equation}
where $\character_{K} $ is the characteristic function given as
\begin{equation*}
\character_{K} (x)=
\begin{cases}
1 \ & \mbox{if} \  x \in K ,\\
0 & \mbox{otherwise}.
\end{cases}
\end{equation*}

In addition, for \RevTwo{$\sigma \in \facesKi$} we introduce the average and jump operators for $v \in V_h$
\[
\avs{v}_{\sigma}:= \frac{v^{\rm in}_{\sigma} + v^{\rm out}_{\sigma}  }{2},\quad
\jump{v}_{\sigma}:  = v^{\rm out}_{\sigma}  - v^{\rm in}_{\sigma}
\]
with
\begin{align}
& v^{\rm out}_{\sigma}: = \lim_{\delta \to 0+} v({ x_{\sigma}} + \delta \vc{n}_K),\quad
v^{\rm in}_{\sigma} : = \lim_{\delta \to 0+} v({ x_{\sigma}} - \delta \vc{n}_K),\\
& v_{\sigma}^+:= \lim_{\delta \to 0+} v({ x_{\sigma}} + \delta \RevTwo{\ve_i}),\quad \quad
v_{\sigma}^-: = \lim_{\delta \to 0+} v({ x_{\sigma}} - \delta \RevTwo{\ve_i}).
\end{align}
Obviously, it holds that
\begin{align}\label{In-Out}
v_\sigma^{\rm in} = \frac{2 \avs{v}_\sigma - \jump{v}_\sigma}{2}, \quad
v_\sigma^{\rm out} = \frac{2 \avs{v}_\sigma + \jump{v}_\sigma}{2}.
\end{align}

\paragraph{Discrete differential operators.}
The discrete differential operators are defined for the piecewise constant function $r_h \in Q_h$.
Firstly we define the discrete differential operator at interface $\sigma = K|L \in \faces$ by
\begin{align}
\left(\Gradedge r_h\right) _{\sigma} = \frac{\RevOne{\jump{r_h}_{\sigma}}}{h} \cdot \vn, \quad r_h \in Q_h. 
\end{align}
Having defined the {\em interfacial differential operator}  $\left(\Gradedge r_h\right) _{\sigma}$ we can proceed to define the discrete differential operator in an element $K$, denoted by $\left(\Gradh r_h\right) _{K}$, see Section \ref{sec:minmod} for further details.  Finally, we have the differential operator on $\tor$ that is defined elementwise
\begin{align}\label{DO}
&\Gradh r_h(x)  = \sum_{K\in\grid}  \left(\Gradh r_h\right) _{K}\character_{K}{(x)}, \quad
\Divh \vc{r}_h(x) = \mbox{tr}(\Gradh \vc{r}_h)(x),\quad \vc{r}_h
 \in Q_h^d.
 \end{align}
Here $\mbox{tr}(A)$ means the trace of the matrix $A$.

\paragraph{Reconstruction operator. }
Having the element differential operator $\Gradh$, we can define the following reconstruction operator for the piecewise constant function $r_h \in Q_h$
\begin{align}\label{RV}
& R_V: Q_h \to V_h, \quad R_V r_h (x) = \sum_{K \in \grid}  \character_{K}(x) \big( r_h + (x - x_K)\cdot \Gradh  r_h \big).
\end{align}


\paragraph{Time discretization.}
Given a time step $\TS>0$ we divide the time interval $[0,T]$ into $N_T=T/\TS$ uniform parts,  and denote $t_k= k\TS.$
Consider a pointwise function $v$, which is only known at time level $t_k,\, k=0,\dots,N_T$.
Let $L_{\TS}(0,T)$ \RevTwo{denote the space} of all piecewise constant in time functions $v$, such that
\begin{align*}
&  v(t) =v(t_0)  \ \mbox{ for } \  t < \Delta t, \quad  v(t)=v(t_k) \ \mbox{ for } \ t\in [k\TS,(k+1)\TS), \ \   k=1,\cdots, N_T.
\end{align*}
\RevOne{Combining with the definition of $Q_h$, we further denote the space of all piecewise constant in space and time functions by $L_{\TS}(0,T;Q_h)$.}
Further, we define the forward Euler time discretization operator $D_t$  as follows
\[
D_t v(t) = \frac{v (t + \TS) - v (t) }{\TS} .
\]
Hereafter, we shall write $v(t), D_t v(t)$ as $v, D_t v$ for simplicity if there is no confusion.

\subsection{Minmod limiter}\label{sec:minmod}
We now continue to explain the way how to define the element differential operator $\Gradh$ using  the interfacial  differential operator $\Gradedge$.
Here we use the Minmod limiter dimension by dimension, i.e. for any $r_h \in Q_h$
\RevTwo{
\begin{align}\label{Minmod}
 \left(\Gradh^{(i)} r_h\right) _{K} :=  \begin{cases}
\min_{\sigma \in \facesKi}  \left(\Gradedge^{(i)} r_h\right) _{\sigma} \ & \mbox{if }  \left(\Gradedge^{(i)} r_h\right) _{\sigma} > 0, \  \sigma \in \facesKi  , \\
\max_{\sigma \in \facesKi}  \left(\Gradedge^{(i)} r_h\right) _{\sigma} \ & \mbox{if }   \left(\Gradedge^{(i)} r_h\right) _{\sigma} < 0, \  \sigma \in \facesKi  , \\
0 & \mbox{otherwise}.
\end{cases}
\end{align}
}
Here the superscript $(i)$ means the $\ith$ component of a vector.
\RevOne{In what follows we derive a suitable property of the reconstructed solutions, which will be frequently used in the stability study in Section~\ref{Stability}.}

Let $\sigma := K|L$.
\RevOne{With \eqref{Minmod} we obtain}
\begin{subequations}\label{es-grad}
\begin{align}\label{es-grad-1}
& h \left(\Gradh^{(i)}  r_h\right) _{L} \in \co{0}{ \RevOne{\jump{r_h}_{\sigma}} }, \quad h \left(\Gradh^{(i)}  r_h\right) _{K} \in \co{0}{ \RevOne{ \jump{r_h}_{\sigma}} } , \\
& \co{A}{B} \equiv [ \min\{A,B\} , \max\{A,B\}].
\end{align}
\end{subequations}
\RevOne{In the sequel we shall omit the subscript $\sigma$ in $\jump{\cdot}_{\sigma}$ and $\avs{\cdot}_{\sigma}$ for simplicity, if there is no confusion.}
On the other hand,
with the definition of $R_V$, cf. \eqref{RV}, we have
\begin{subequations}\label{eq-min}
\begin{align}\label{eq-min-1}
& \jump{R_V r_h} = r_L - \frac{h}2  \left(\Gradh^{(i)}  r_h\right) _{L} - \left(r_K+\frac{h}2  \left(\Gradh^{(i)}  r_h\right) _{K} \right) = \jump{r_h} - h \avs{\Gradh^{(i)}  r_h},\\
& \avs{R_V r_h} = \avs{r_h} - \frac{h}4 \jump{\Gradh^{(i)}  r_h}.
\end{align}
\end{subequations}
Let us define two parameters
\begin{align}\label{factor}
& \lambda_{\sigma} := \begin{cases}
\frac{\jump{R_V r_h}}{\jump{r_h}}  & \mbox{ if } \jump{r_h} \neq 0, \\
1 & \mbox{ otherwise},
\end{cases}
\quad \quad
\mu_{\sigma} := \begin{cases}
\frac{\avs{r_h}}{\avs{R_V r_h}}  & \mbox{ if } \avs{R_V r_h}   \neq 0, \\
1,& \mbox{ if } \avs{r_h} = \avs{R_V r_h} = 0, \\
+\infty & \mbox{ otherwise}.
\end{cases}
\end{align}
Then \eqref{eq-min} together with \eqref{In-Out} give
\begin{subequations}\label{factor-1}
\begin{align}
& h\avs{\Gradh^{(i)}  r_h} = (1-\lambda_{\sigma}) \jump{r_h}, \quad h\jump{\Gradh^{(i)}  r_h} = 4(\mu_{\sigma}-1) \avs{R_V r_h}, \\
&    h\left(\Gradh^{(i)}  r_h\right) _{K} =  (1-\lambda_{\sigma}) \jump{r_h} - 2(\mu_{\sigma}-1) \avs{R_V r_h}, \\
&    h\left(\Gradh^{(i)}  r_h\right) _{L} =  (1-\lambda_{\sigma}) \jump{r_h} + 2(\mu_{\sigma}-1) \avs{R_V r_h}.
\end{align}
\end{subequations}
\RevTwo{Note that we shall understand $2(\mu_{\sigma}-1) \avs{R_V r_h}$ as $2\avs{r_h}$ given $\avs{R_V r_h} = 0,  \avs{ r_h} \neq 0$.}

Therefore, combining \eqref{es-grad} with \eqref{factor-1} we obtain the following property for the reconstructed solutions
\begin{align}\label{MinMod}
&\lambda_{\sigma} \in [0,1], \quad
(1-\lambda_{\sigma}) \jump{r_h} \pm 2(\mu_{\sigma}-1) \avs{R_V r_h} \in \co{0}{\jump{r_h}}.
\end{align}

\begin{Remark}\label{rmk-trivial}

\

\begin{itemize}
\item Given $\jump{r_h} = 0$, we have $\jump{r_h} = \jump{R_V r_h} = 0$ and $\avs{r_h} = \avs{R_V r_h}$ resulting to $\lambda_{\sigma} =\mu_{\sigma} = 1$.

\item Given $\jump{R_V r_h} = 0, \jump{r_h} \neq 0$, thanks to \eqref{es-grad-1} and \eqref{eq-min-1} we have $h \left(\Gradh^{(i)} r_h\right) _{L} = h \left(\Gradh^{(i)} r_h\right) _{\RevOne{K}} = \jump{r_h}$. Consequently, we have $\lambda_{\sigma} = 0, \, \mu_{\sigma} = 1$.
\end{itemize}

%
\end{Remark}

\RevTwo{
\begin{Remark}
With \eqref{RV} and \eqref{es-grad} it is easy to check
\begin{equation}\label{minmod-max}
(R_V u_h)_{\sigma}^{\pm} \in \co{u_K}{u_L}, \quad \avs{R_V u_h} \in \co{u_K}{u_L}, \quad  \avs{u_h} \in \co{u_K}{u_L}, \quad \sigma:=K|L.
\end{equation}
Consequently, we have the following estimates
\begin{align}\label{minmod-max-1}
\abs{(R_V u_h)_{\sigma}^{\pm}} \leq \max( \abs{u_K}, \abs{u_L} ), \quad \abs{\avs{R_V u_h}} \leq \max( \abs{u_K}, \abs{u_L} ), \quad \abs{\avs{u_h}} \leq \max( \abs{u_K}, \abs{u_L} ).
\end{align}
\end{Remark}
}

\subsection{Finite volume GRP method}
Letting $u_h \in  L_{\TS}(0,T;Q_h(\tor))$, the GRP scheme can be described as
\begin{subequations}\label{scheme-old-1}
\begin{align}\label{GRP}
& D_t u_h   +  \sum_{\sigma \in \facesK} \frac{|\sigma|}{|K|} f_{\sigma}^{GRP} \I_{d\times 1} \cdot \vn_K  = 0, \\
& f_{\sigma}^{GRP} := f(u^{\it RP}_{\sigma}) + \frac{\TS}{2}   \frac{\pd}{\pd u}f (u^{\it RP}_{\sigma}) \cdot (\pd_t u)^{\it GRP}_{\sigma}.  \label{GRP-flux}
\end{align}
\end{subequations}
Here, for $\sigma \in \facesKi$, $u^{\it RP}_{\sigma}$ is the solution at the face $\sigma$ of the local Riemann problem
\begin{equation*}
\begin{cases}
\pd_{t} u + \pd_{x^{(i)}} f(u) = 0, \quad t > \tau, \\
u(x^{(i)},\tau) = \begin{cases}
( R_V u_h )^-_{\sigma} & \mbox{ if }~ x^{(i)} < x_{\sigma}^{(i)}, \\
( R_V u_h )^+_{\sigma} & \mbox{ if }~ x^{(i)} > x_{\sigma}^{(i)}
\end{cases}
\end{cases}
\end{equation*}
and $(\partial_t u )^{\it GRP}_{\sigma} $ is obtained  by resolving analytically the local  quasi generalized Riemann problem
\begin{equation*}
\begin{cases}
\pd_t u + \pd_{x^{(i)}} f(u) = -\sum_{j\neq i}  \frac{\pd}{\pd u}f (u^{\it RP}_{\sigma}) \cdot (\pd_{x^{(j)}} u)_{\sigma}, \quad t > \tau, \\
(\pd_{x^{(j)}} u)_{\sigma}  = \begin{cases}
( \Gradh^{(j)} u_h )^-_{\sigma} & \mbox{if} \ u^{\it RP}_{\sigma} > 0,\\
( \Gradh^{(j)} u_h )^+_{\sigma} & \mbox{if} \ u^{\it RP}_{\sigma} < 0,
\end{cases}
\\
u(x,\tau) = R_Vu_h.
\end{cases}
\end{equation*}
As above the superscript $(i)$ means the $\ith$ component of a vector. For more details about the quasi generalized Riemann problem, we refer to the work of Li and Du \cite{Li-Du:2016}.
Table~\ref{t-GRP} lists all possible cases of $u^{\it RP}_{\sigma}$ and $(\partial_t u )^{\it GRP}_{\sigma}$.
\begin{table}[htpb]
	\centering
	\caption{Explicit expressions of $u^{\it RP}_{\sigma}$ and $(\partial_t u )^{\it GRP}_{\sigma}$.} \label{t-GRP}
	\begin{tabular}{|c|c|cc|}
		\hline
		 & Cases & $u^{\it RP}_{\sigma}$ &  $(\partial_t u )^{\it GRP}_{\sigma}$   \\
		\hline
		\hline
\multirow{2}{*}{Shock} &
$\jump{ R_V u_h } < 0,~ \avs{ R_V u_h } > 0$ &
$( R_V u_h )^{-}_{\sigma}$ &
$-( R_V u_h  )^{-}_{\sigma}\; (\Divh (u_h \I_{d\times 1}))_{\sigma}^-$  \\

&
$\jump{ R_V u_h } < 0,~ \avs{ R_V u_h } \leq 0$ &
$( R_V u_h )^{+}_{\sigma}$ &
$ -( R_V u_h  )^{+}_{\sigma}\; (\Divh (u_h \I_{d\times 1}))_{\sigma}^+$  \\

	\hline
	\hline
\multirow{3}{*}{Rarefaction} &
$ 0< ( R_V u_h )^{-}_{\sigma} \leq ( R_V u_h )^{+}_{\sigma}$ &
$( R_V u_h )^{-}_{\sigma}$ &
$ -( R_V u_h  )^{-}_{\sigma}\;  (\Divh (u_h \I_{d\times 1}))_{\sigma}^-$  \\

&
$( R_V u_h )^{-}_{\sigma} \leq 0 \leq ( R_V u_h )^{+}_{\sigma}$ &
$0$ &
$0$  \\

&
$( R_V u_h )^{-}_{\sigma} \leq ( R_V u_h )^{+}_{\sigma} < 0$ &
$( R_V u_h )^{+}_{\sigma} $ &
$ -( R_V u_h  )^{+}_{\sigma} \; (\Divh (u_h \I_{d\times 1}))_{\sigma}^+$   \\
		\hline
	\end{tabular}
\end{table}

Now let us rewrite the scheme \eqref{GRP} into the  weak form, i.e. for all $\phi_h \in Q_h$ we have
\begin{align} \label{scheme-old}
& \intTd{ \phi_h  D_t u_h}-  \intfacesint{ f_{\sigma}^{GRP}\jump{\phi_h}}= 0. 
\end{align}
According to Table~\ref{t-GRP} we obtain the expression of  the numerical flux $f_{\sigma}^{GRP} $
\begin{align}\label{fGRP}
 f_{\sigma}^{GRP} = \begin{cases}
\frac12 \abs{( R_V u_h )^{-}_{\sigma}}^2 \cdot \Big( 1 - \TS \cdot  (\Divh (u_h \I_{d\times 1}))_{\sigma}^- \Big) & \mbox{for Cases 1 and 3}, \\
\frac12 \abs{( R_V u_h )^{+}_{\sigma}}^2 \cdot \Big( 1 - \TS  \cdot (\Divh (u_h \I_{d\times 1}))_{\sigma}^+ \Big)  & \mbox{for Cases 2 and 5}, \\
0 & \mbox{for Case 4}.
 \end{cases}
\end{align}

\begin{Remark}\label{rmk-flux}
Denote
\begin{equation}\label{fRP}
f_{\sigma}^{RP}(R_V u_h) := f(u^{\it RP}_{\sigma})  \quad \mbox{ for all } \sigma \in \faces.
\end{equation}
On the one hand, we know from \eqref{scheme-old} that
it must be ensured that the term $f_{\sigma}^{RP}(R_V u_h)$ is entropy stable since $\Delta t$ could be as small as possible.

On the other hand, it is well-known that the Godunov flux is entropy stable. That is to say, we have
\begin{align}\label{DRP}
f_{\sigma}^{RP}(R_V u_h) - f_{\sigma}^{EC}(R_V u_h) = - D_{\sigma}^{RP}(R_V u_h) \jump{R_V u_h},
\end{align}
where $f_{\sigma}^{EC}$ is the second order entropy conservative numerical flux defined by
\begin{align}\label{EC-flux}
f_{\sigma}^{EC}(r_h) = \frac1{24} \bigg( 12 \abs{\avs{r_h}_\sigma}^2 + \abs{\jump{r_h}_\sigma}^2 \bigg), \quad r_h \in V_h,
\end{align}
and $D_{\sigma}^{RP} \geq 0$, see \eqref{DRP1} for the definition of $D_{\sigma}^{RP}$.

Hence, in what follows we study firstly the stability of $f_{\sigma}^{RP}(R_V u_h)$ with
\begin{align}\label{fRP-1}
f_{\sigma}^{RP}(R_V u_h)  - f_{\sigma}^{EC}(u_h)  &= f_{\sigma}^{RP}(R_V u_h) - f_{\sigma}^{EC}(R_V u_h)+ f_{\sigma}^{EC}(R_V u_h) - f_{\sigma}^{EC}(u_h) \br
&= - D_{\sigma}^{RP}(R_V u_h) \lambda_{\sigma} \jump{u_h} + f_{\sigma}^{EC}(R_V u_h)- f_{\sigma}^{EC}(u_h). 
\end{align}
Here $\lambda_{\sigma}$ is defined in \eqref{factor}.
\end{Remark}

\section{Stability}\label{Stability}
The goal of this section is to propose an entropy stable scheme based on the scheme \eqref{scheme-old}, and then prove the discrete entropy inequality.

\subsection{Properties of the ``RP" flux}\label{sec-RP}
Inspired by Remark \ref{rmk-flux}, we firstly analyze the property of the ``RP" flux $f_{\sigma}^{RP}(R_V u_h)$.
With \eqref{factor} and \eqref{EC-flux} we have
\begin{align*}
-f_{\sigma}^{RP}(R_V u_h)  + f_{\sigma}^{EC}(u_h) & = D_{\sigma}^{RP}(R_V u_h) \lambda \jump{u_h} + \frac{\mu^2 - 1}{2} \avs{R_V u_h}^2+ \frac{1-\lambda^2}{24} \jump{u_h}^2.
\end{align*}
Here and hereafter we shall write $\lambda_{\sigma}, \mu_{\sigma}$  as $\lambda, \mu$ for simplicity.
Applying Remark \ref{rmk-trivial}  we know that
\begin{itemize}
\item $\jump{u_h} = 0$: It holds $-f_{\sigma}^{RP}(R_V u_h)  + f_{\sigma}^{EC}(u_h) =0$.

\item $\jump{u_h} \neq 0, \jump{R_V u_h} = 0$: It holds $-f_{\sigma}^{RP}(R_V u_h)  + f_{\sigma}^{EC}(u_h) =   \frac{1}{24} \jump{u_h}^2.$
\end{itemize}
Hence, we define
\begin{equation}\label{D_RP}
D_{\sigma}^{(1)} := \begin{cases}
\frac{1}{24} \jump{u_h} &  \mbox{if} \ \RevTwo{\jump{R_V u_h} = 0},\\
\lambda D_{\sigma}^{RP}(R_V u_h)   + \frac{\mu^2 - 1}{2} \frac{\avs{R_V u_h}^2}{\jump{u_h}}+ \frac{1-\lambda^2}{24} \jump{u_h}&  \mbox{if} \  \jump{R_V u_h} \neq 0.
\end{cases}
\end{equation}
Obviously, $f_{\sigma}^{RP}(R_V u_h)$ is not entropy stable, since $D_{\sigma}^{(1)} < 0$ when $\jump{u_h} < 0, \jump{R_V u_h} = 0$. Thus, it is necessary to add a stabilized term, the so-called artificial viscosity, in $f_{\sigma}^{GRP}(R_V u_h)$.
To be more precise, in what follows we derive the estimates of $D_{\sigma}^{(1)}$.

\begin{Lemma}\label{es-D1}
For a numerical solution $u_h$ obtained by a finite volume method with an RP flux $f_{\sigma}^{RP}(R_V u_h)$ we have
\begin{align*}
\frac1{24} \jump{u_h} \leq D_{\sigma}^{(1)} \leq \frac{1}{12} \abs{\jump{u_h}} + \frac78 \RevOne{\max(\abs{u_K},\abs{u_L})}, \RevOne{\quad  \sigma:=K|L.}
\end{align*}
\end{Lemma}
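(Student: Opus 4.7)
The plan rests on the identity
\[
D^{(1)}_\sigma \jump{u_h} = f^{EC}(u_h) - f^{RP}(R_V u_h),
\]
which covers both branches of the definition \eqref{D_RP}. First I would verify this. In the branch $\jump{R_V u_h}\neq 0$, it follows from \eqref{DRP} together with the algebraic identities $\mu^2\avs{R_V u_h}^2 = \avs{u_h}^2$ and $\lambda^2\jump{u_h}^2 = \jump{R_V u_h}^2$, which yield $f^{EC}(u_h) - f^{EC}(R_V u_h) = \tfrac12(\mu^2-1)\avs{R_V u_h}^2 + \tfrac{1-\lambda^2}{24}\jump{u_h}^2$; adding the Godunov--entropy balance $f^{EC}(R_V u_h) - f^{RP}(R_V u_h) = \lambda D^{RP}(R_V u_h)\jump{u_h}$ then reproduces the nontrivial line of \eqref{D_RP} multiplied by $\jump{u_h}$. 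In the branch $\jump{R_V u_h}=0$, Remark~\ref{rmk-trivial} forces $\avs{R_V u_h} = \avs{u_h}$ and $f^{RP}(R_V u_h) = \tfrac12\avs{u_h}^2$, so both sides collapse to $\tfrac{1}{24}\jump{u_h}^2$.

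For the lower bound, substituting $f^{EC}(u_h) = \tfrac12\avs{u_h}^2 + \tfrac1{24}\jump{u_h}^2$ into the identity reduces $D^{(1)}_\sigma \geq \tfrac1{24}\jump{u_h}$ to the single scalar inequality $\jump{u_h}\bigl(\tfrac12\avs{u_h}^2 - f^{RP}(R_V u_h)\bigr) \geq 0$. The interior gradient bound \eqref{es-grad-1} then forces the reconstructed face states to bracket $\avs{u_h}$: in the rarefaction regime $\jump{u_h}>0$, $(R_V u_h)^-_\sigma \in [u_K,\avs{u_h}]$ and $(R_V u_h)^+_\sigma \in [\avs{u_h},u_L]$, while the orderings reverse in the shock regime $\jump{u_h}<0$. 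Since for the Burgers flux the Godunov value $f^{RP}(R_V u_h)$ equals the minimum of $\tfrac12 u^2$ over $\co{(R_V u_h)^-_\sigma}{(R_V u_h)^+_\sigma}$ in the rarefaction case and the maximum in the shock case, evaluating the parabola at $u=\avs{u_h}$ gives $f^{RP}(R_V u_h) \leq \tfrac12\avs{u_h}^2$ and $f^{RP}(R_V u_h) \geq \tfrac12\avs{u_h}^2$ respectively, which is exactly the required sign.

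For the upper bound, I would bound each summand in the nontrivial branch of \eqref{D_RP} in turn. By \eqref{MinMod}, $\lambda\in[0,1]$, and a row-by-row inspection of Table~\ref{t-GRP} together with \eqref{minmod-max-1} yields $D^{RP}(R_V u_h) \leq \tfrac12\max(|u_K|,|u_L|)$. For the middle term I would write $\tfrac{(\mu^2-1)\avs{R_V u_h}^2}{2\jump{u_h}} = \tfrac{(\avs{u_h}-\avs{R_V u_h})(\avs{u_h}+\avs{R_V u_h})}{2\jump{u_h}}$, using \eqref{MinMod} (which implies $|\avs{u_h}-\avs{R_V u_h}| \leq \tfrac14|\jump{u_h}|$ from the fact that $\pm 2(\mu-1)\avs{R_V u_h}$ differ by a quantity of absolute value $\leq |\jump{u_h}|$) together with \eqref{minmod-max-1} (which gives $|\avs{u_h}+\avs{R_V u_h}| \leq 2\max(|u_K|,|u_L|)$) to bound its absolute value by $\tfrac14\max(|u_K|,|u_L|)$. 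The last summand is bounded by $\tfrac1{24}|\jump{u_h}|$ since $\lambda\in[0,1]$. Summing yields $D^{(1)}_\sigma \leq \tfrac34\max(|u_K|,|u_L|) + \tfrac1{24}|\jump{u_h}|$, safely inside the stated bound; the trivial branch is immediate.

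The main obstacle is the lower bound: once the identity $D^{(1)}_\sigma\jump{u_h} = f^{EC}(u_h) - f^{RP}(R_V u_h)$ is in hand, combining it with the Minmod bracketing $\avs{u_h}\in\co{(R_V u_h)^-_\sigma}{(R_V u_h)^+_\sigma}$ lets the min/max characterization of the Godunov flux close the proof in one stroke and avoids an awkward case analysis of the five rows of Table~\ref{t-GRP} against the possible signs of $\avs{u_h}$ (in particular the shock subcases where $\avs{u_h}$ and the reconstructed states have mixed signs).
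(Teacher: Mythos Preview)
Your argument is correct and takes a genuinely different route from the paper. The paper proceeds by a full five-case analysis following Table~\ref{t-GRP}: in each case it substitutes the explicit formula \eqref{DRP1} for $D^{RP}_\sigma(R_V u_h)$, derives constraints on $\lambda,\mu$ from \eqref{MinMod}, and for the lower bound introduces the auxiliary variable $\chi=\avs{R_V u_h}/\jump{u_h}$ and optimises the quadratic $g(\chi)=D^{(1)}_\sigma/\jump{u_h}$ over the admissible range of $\chi$, treating separately $\mu<1$, $\mu=1$, $\mu>1$ etc. Your lower bound sidesteps all of this: the key observation that the Minmod gradient bound \eqref{es-grad-1} forces $\avs{u_h}\in\co{(R_V u_h)^-_\sigma}{(R_V u_h)^+_\sigma}$, together with Osher's min/max characterisation of the Godunov flux for a convex scalar law, reduces the inequality $\jump{u_h}\bigl(\tfrac12\avs{u_h}^2-f^{RP}(R_V u_h)\bigr)\geq 0$ to a one-line evaluation of the parabola at $\avs{u_h}$. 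This is both shorter and more conceptual, and it makes transparent why the Minmod limiter is exactly what is needed. For the upper bound your term-by-term estimate (with the factorisation $(\mu^2-1)\avs{R_V u_h}^2=(\avs{u_h}-\avs{R_V u_h})(\avs{u_h}+\avs{R_V u_h})$ and $|\avs{u_h}-\avs{R_V u_h}|\leq\tfrac14|\jump{u_h}|$) actually yields the sharper constant $\tfrac34\max(|u_K|,|u_L|)+\tfrac1{24}|\jump{u_h}|$, uniformly over all cases, whereas the paper's case-wise bounds attain $\tfrac78$ only in Case~3 and $\tfrac1{12}$ only in the shock cases. What the paper's approach buys is a record of which configurations saturate which parts of the estimate; what yours buys is economy and a structural explanation.
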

\begin{proof}
Thanks to \eqref{D_RP}, in what follows we analyze $D_{\sigma}^{(1)}$ for $\jump{R_V u_h} \neq 0$ case by case.
Firstly, let us consider the cases generating a shock wave, i.e. $\jump{R_V u_h} = \lambda \jump{u_h}< 0$, cf. Cases 1 and 2 in Table \ref{t-GRP}.

\paragraph{Case 1 -- $\jump{ R_V u_h } < 0,~ \avs{ R_V u_h } > 0$:} Applying \eqref{factor} and \eqref{factor-1} yields
\begin{align*}
&\lambda \in (0,1], \quad
 \jump{u_h}\leq (1-\lambda) \jump{u_h} \pm 2(\mu-1) \avs{R_V u_h} \leq 0, \quad \jump{u_h}  < 0, \quad\avs{R_V u_h}  > 0.
\end{align*}
Consequently, we have
\begin{align}\label{case1}
&\lambda \in (0,1], \quad \jump{u_h}< 0, \quad  \avs{R_V u_h}  > 0, \quad
   2\abs{\mu-1}  \avs{R_V u_h} \leq  -\min(\lambda,1-\lambda) \jump{u_h}.
\end{align}
On the other hand, \RevTwo{with \eqref{DRP1}} we rewrite $D_{\sigma}^{(1)}$ as follows
\begin{align} \label{D-case3}
D_{\sigma}^{(1)} = & \frac{6 \lambda \avs{R_V u_h} - \lambda^2 \jump{u_h}}{12}  + \frac{\mu^2 - 1}{2} \frac{\avs{R_V u_h}^2}{\jump{u_h}}+ \frac{1-\lambda^2}{24} \jump{u_h} \nonumber \\
= & \frac{\lambda \avs{R_V u_h}}{2}  + \frac{\mu^2 - 1}{2} \frac{\avs{R_V u_h}^2}{\jump{u_h}}+ \frac{1-3\lambda^2}{24} \jump{u_h}.
\end{align}

Hence, combining \eqref{case1} with \eqref{D-case3} we obtain the upper bound  of $ D_{\sigma}^{(1)}$:
\begin{align*}
 D_{\sigma}^{(1)}
&\leq \frac{\avs{R_V u_h}}{2} + \frac{1}{12} \abs{\jump{u_h}} \quad\quad  \mbox{if }  \mu \in (-\infty,-1]\cup [1,\infty), \br
D_{\sigma}^{(1)}
& \leq  \frac{\avs{R_V u_h}}{2}   + \frac{\mu^2 - 1}{2} \frac{ \min(\lambda,1-\lambda) }{-2(1-\mu)} \cdot \avs{R_V u_h}+ \frac{1}{12} \abs{\jump{u_h}} \br
&\leq \frac{\avs{R_V u_h}}{2}   + \frac{1}{4} \avs{R_V u_h}+ \frac{1}{12} \abs{\jump{u_h}} = \frac34\avs{R_V u_h} + \frac{1}{12} \abs{\jump{u_h}} \quad\quad  \mbox{if }  \mu \in (-1,1).
\end{align*}
Further, letting $\chi := \frac{\avs{R_V u_h}}{\jump{u_h}}$ and
\begin{align}
g(\chi) := \frac{D_{\sigma}^{(1)}}{ \jump{u_h}} = \frac{\mu^2 - 1}{2} \chi^2 + \frac{\lambda}{2} \chi  +  \frac{1-3\lambda^2}{24},
\end{align}
in what follows we derive the lower bound of $ D_{\sigma}^{(1)}$ by analyzing the maximum of $g(\chi)$ in the range of
\begin{equation}\label{case1-1}
\{ \chi \, | \, \chi < 0, \  2\abs{\mu-1}  \chi \geq - \min(\lambda,1-\lambda)  \}  \ = \
\begin{cases}
\{ \chi \, | \, \chi < 0  \}  \ & \mbox{if } \ \mu = 1, \\
\left\{ \chi \, \big| \,  \frac{- \min(\lambda,1-\lambda) }{2\abs{\mu-1}} \leq \chi < 0 \right\} \ & \mbox{if } \ \mu \neq 1.
\end{cases}
\end{equation}
Specifically,
\begin{itemize}
\item If $\mu = 1$ then we have $  g(\chi) = \frac{\lambda}{2} \chi + \frac{1-3 \lambda^2}{24} < 1/24$.

\item If $\mu \in (-1,1)$, thanks to $\mu^2 - 1 < 0, \ -\frac{\lambda}{2(\mu^2 - 1)} > 0$ we have $  g(\chi)< g(0) = \frac{1-3 \lambda^2}{24} < 1/24$.

\item If $\mu \in (1,\infty)$, then it holds $ -\frac{ \min(\lambda,1-\lambda)}{2(\mu-1)}\leq  \chi < 0$ and
\begin{align}
g(0) = \frac{1-3 \lambda^2}{24}< \frac1{24}, \quad g\left( -\frac{ \lambda}{2(\mu-1)} \right) = \frac1{24}.
\end{align}
Thanks to $\mu^2 - 1 > 0, \ -\frac{\lambda}{2(\mu^2 - 1)} < 0$ we get  $g(\chi) \leq \max \left( g\left( -\frac{ \lambda}{2(\mu-1)} \right), g(0) \right)= \frac1{24}$.

\item If $\mu \in (-\infty,-1)$, then it holds $  -\frac{ \min(\lambda,1-\lambda)}{2(1-\mu)}\leq  \chi < 0$ and
\begin{align}
g(0) = \frac{1-3 \lambda^2}{24}< \frac1{24}, \quad g\left( -\frac{ \lambda}{2(1-\mu)} \right) = \frac1{24} - \frac{\lambda^2}{2(1-\mu)} < \frac1{24}.
\end{align}
Thanks to  $\mu^2 - 1 > 0, \ -\frac{\lambda}{2(\mu^2 - 1)} < 0$ we have $g(\chi) \leq \max \left( g\left( -\frac{ \lambda}{2(1-\mu)} \right), g(0) \right)< \frac1{24}$.
\end{itemize}

Altogether, we have in the Case 1: 
\begin{align*}
\frac1{24} \jump{u_h} \leq D_{\sigma}^{(1)} \leq \frac34\avs{R_V u_h} + \frac{1}{12} \abs{\jump{u_h}}.
\end{align*}

\paragraph{Case 2 -- $ \jump{ R_V u_h } < 0,~ \avs{ R_V u_h } \leq 0$:} Analogously to Case 1 we obtain \RevTwo{$\frac1{24} \jump{u_h} \leq D_{\sigma}^{(1)} \leq \frac34\abs{\avs{R_V u_h}} + \frac{1}{12} \abs{\jump{u_h}}$}. We leave the details for interested readers.

\

Next, let us consider the cases generating a rarefaction wave: $\jump{R_V u_h} = \lambda \jump{u_h}> 0$, i.e. Cases 3, 4 and 5 in Table \ref{t-GRP}.

\paragraph{Case 3 -- $ 0< ( R_V u_h )^{-}_{\sigma} < ( R_V u_h )^{+}_{\sigma}$:}  Applying \eqref{factor}, \eqref{factor-1} and
\begin{align}\label{eq01}
& ( R_V u_h )^{-}_{\sigma}  = \avs{R_V u_h } - \frac{\lambda}2 \jump{u_h}, \quad
 ( R_V u_h )^{+}_{\sigma}  = \avs{R_V u_h } + \frac{\lambda}2 \jump{u_h},
\end{align}
we have
\begin{align*}
&\lambda \in (0,1], \quad
 0 \leq (1-\lambda) \jump{u_h} \pm 2(\mu-1) \avs{R_V u_h} \leq  \jump{u_h}, \quad
 2\avs{R_V u_h} >  \lambda \jump{u_h} > 0
\end{align*}
resulting to
\begin{align}\label{case3}
&\lambda \in (0,1], \quad \mu \in (0,2), \quad 2 \avs{R_V u_h} > \lambda \jump{u_h} > 0, \quad
   2\abs{\mu-1}  \avs{R_V u_h} \leq  \min(\lambda,1-\lambda) \jump{u_h}.
\end{align}
On the other hand, thanks to \eqref{D_RP} and \eqref{DRP1} we know that the expression of $D_{\sigma}^{(1)}$ is the same as the one in Case 1.

Hence, combining \eqref{D-case3} with \eqref{case3} we obtain the upper bound of $ D_{\sigma}^{(1)}$:
\begin{align*}
 D_{\sigma}^{(1)}
&\leq  \frac{\avs{R_V u_h}}{2}  + \frac{1}{24} \jump{u_h}  \quad\quad  \mbox{if }  \mu \in (0,1],
\br
D_{\sigma}^{(1)}
& \leq  \frac{\avs{R_V u_h}}{2}   + \frac{\mu^2 - 1}{2} \frac{ \min(\lambda,1-\lambda) }{2(\mu-1)} \cdot \avs{R_V u_h}+ \frac{1}{24} \jump{u_h} \br
&\leq \frac{\avs{R_V u_h}}{2}   + \frac{(\mu+1)}{4} \min(\lambda,1-\lambda)  \avs{R_V u_h}+ \frac{1}{24} \jump{u_h} \br
& \leq \frac{\avs{R_V u_h}}{2}   + \frac{3}{4} \cdot \frac12  \avs{R_V u_h}+ \frac{1}{24} \jump{u_h} = \frac78 \avs{R_V u_h}+ \frac{1}{24} \jump{u_h} \quad\quad  \mbox{if }  \mu \in (1,2).
\end{align*}
On the other hand, we study the lower bound of $ D_{\sigma}^{(1)}$ by analyzing the minimum of $g(\chi)$ by in the range of
\begin{equation}
\left\{ \chi \, | \, \chi > \frac{\lambda}{2}, \  2\abs{\mu-1}  \chi \leq \min(\lambda,1-\lambda)  \right\}  \ = \
\begin{cases}
\{ \chi \, | \, \chi >  \lambda/2 \}  \ & \mbox{if } \ \mu = 1, \\
\left\{ \chi \, \big| \,  \lambda/2 < \chi \leq \frac{ \min(\lambda,1-\lambda) }{2\abs{\mu-1}} \right\} \ & \mbox{if } \ \mu \neq 1.
\end{cases}
\end{equation}
Specifically,
\begin{itemize}
\item If $\mu \in [1,2)$ then we have $g(\chi) \geq \frac{\lambda}{2} \chi + \frac{1-3 \lambda^2}{24} > \frac{\lambda^2}{4} + \frac{1-3 \lambda^2}{24} \geq 1/24$.

\item If $\mu \in (0,1)$, then it holds
\begin{align*}
 & g\left(\frac{\lambda}2 \right) = \frac{1+\RevTwo{3\lambda^2 \mu^2}}{24} > \frac{1}{24}, \quad   g\left(\frac{\lambda}{2(1-\mu)} \right) = \frac{1}{24}.
\end{align*}
Thanks to $\frac{\mu^2 - 1}{2} < 0, \, \frac{\lambda}{2(1-\mu^2)} > 0$ we obtain $g(\chi) \geq \frac{1}{24}$.
\end{itemize}

Altogether, we  have in the Case 3: 
\begin{align*}
\frac1{24} \jump{u_h} \leq D_{\sigma}^{(1)} \leq \frac78 \avs{R_V u_h}+ \frac{1}{24} \jump{u_h}.
\end{align*}

\paragraph{Case 4 -- $( R_V u_h )^{-}_{\sigma} < 0 < ( R_V u_h )^{+}_{\sigma}$:}
\
Applying \eqref{factor}, \eqref{factor-1} and \eqref{eq01} we obtain
\begin{align*}
\jump{R_V u_h} \geq 2\abs{\avs{R_V u_h} }, \ \jump{u_h} \geq 2\abs{\avs{ u_h} }.
\end{align*}
On the other hand, \RevTwo{with \eqref{DRP1}} we have
\begin{align*}
D_{\sigma}^{(1)} = & \frac{12 \avs{R_V u_h}^2 + \lambda^2\jump{u_h}^2}{24\jump{u_h}}  + \frac{\mu^2 - 1}{2} \frac{\avs{R_V u_h}^2}{\jump{u_h}}+ \frac{1-\lambda^2}{24} \jump{u_h}= \frac{\mu^2}{2} \frac{\avs{R_V u_h}^2}{\jump{u_h}}+ \frac{1}{24} \jump{u_h}.
\end{align*}
Consequently, we have the following estimate of $D_{\sigma}^{(1)}$:
\begin{align*}
\frac{1}{24}  \RevTwo{\jump{u_h}} \leq  D_{\sigma}^{(1)} = \frac{\avs{ u_h}^2}{2\jump{u_h}}+ \frac{1}{24} \jump{u_h}\leq  \frac14 \abs{\avs{ u_h} }+ \frac{1}{24} \jump{u_h}. 
\end{align*}

\paragraph{Case 5 -- $ ( R_V u_h )^{-}_{\sigma} < ( R_V u_h )^{+}_{\sigma} < 0$:} Analogously to Case 3, we obtain \RevTwo{$\frac1{24} \jump{u_h} \leq D_{\sigma}^{(1)} \leq \frac78 \abs{\avs{R_V u_h}}+ \frac{1}{24} \jump{u_h}$}. 

\

In total, \RevTwo{together with \eqref{minmod-max-1} we obtain}
\begin{align*}
\frac1{24} \jump{u_h} \leq D_{\sigma}^{(1)} \leq \frac78 \RevOne{\max(\abs{u_K},\abs{u_L})} + \frac{1}{12} \abs{\jump{u_h}}
\end{align*}
and finish the proof.
\end{proof}

\subsection{Properties of the GRP flux}\label{sec-GRP}

Now we proceed to study the GRP flux \eqref{fGRP},  i.e. for $\sigma \in \facesKi$
\begin{align}
&  f_{\sigma}^{EC}(u_h) - f_{\sigma}^{GRP} =  f_{\sigma}^{EC}(u_h)  -f_{\sigma}^{RP}(R_V u_h) +  f_{\sigma}^{RP}(R_V u_h) - f_{\sigma}^{GRP}   \br
= \ &  D^{(1)}_{\sigma} \jump{u_h}_{\sigma} + \frac{\TS}{2h} \cdot
 \begin{cases}
\abs{( R_V u_h )^{-}_{\sigma}}^2 \cdot   (h\Divh (u_h \I_{d\times 1}))_{\sigma}^-  & \mbox{for Cases 1 and 3}, \\
\abs{( R_V u_h )^{+}_{\sigma}}^2 \cdot   (h\Divh (u_h \I_{d\times 1}))_{\sigma}^+   & \mbox{for Cases 2 and 5}, \\
0 & \mbox{for Case 4}.
 \end{cases}
\end{align}
Thanks to the definition of Minmod limiter \eqref{Minmod} we know that  $(h \Gradh^{(i)} u_h)_K$ has the same sign as $\jump{u_h}_{\sigma}$, and $(h \Gradh^{(j)} u_h)_K,j\neq i,$ has the same sign as $\jump{u_h}_{\hat{\sigma}}, \hat{\sigma} \in \facesKj, \sigma \cap\hat{\sigma} \neq \emptyset$.
Hence, we \RevTwo{rewrite} $ f_{\sigma}^{EC}(u_h) - f_{\sigma}^{GRP} $ as
\begin{align}\label{fGRP-1}
 f_{\sigma}^{EC}(u_h) - f_{\sigma}^{GRP} & = D^{(1)}_{\sigma} \jump{u_h}_{\sigma} + D^{(2)}_{\sigma} \jump{u_h}_{\sigma} + \sum_{j\neq i} D_{\sigma,\hat{\sigma}} \jump{u_h} _{\hat{\sigma}} \RevTwo{=} D_{\sigma}  \jump{u_h}_{\sigma} + \sum_{j\neq i} D_{\sigma,\hat{\sigma}} \jump{u_h} _{\hat{\sigma}},
\end{align}
\RevOne{
where $D_{\sigma}:= D^{(1)}_{\sigma} + D^{(2)}_{\sigma} $, $D^{(1)}_{\sigma}$ is defined in \eqref{D_RP} and
\begin{align*}
& D^{(2)}_{\sigma} \jump{u_h}_{\sigma} :=  \frac{\TS}{2h}
 \begin{cases}
\abs{( R_V u_h )^{-}_{\sigma}}^2 \cdot   (h \Gradh^{(i)} u_h)_{\sigma}^-  & \mbox{for Cases 1 and 3}, \\
\abs{( R_V u_h )^{+}_{\sigma}}^2 \cdot   (h \Gradh^{(i)} u_h)_{\sigma}^+   & \mbox{for Cases 2 and 5}, \\
0 & \mbox{for Case 4},
 \end{cases}
 \br
& D_{\sigma,\hat{\sigma}} \jump{u_h} _{\hat{\sigma}} :=  \frac{\TS}{2h}
 \begin{cases}
\abs{( R_V u_h )^{-}_{\sigma}}^2 \cdot   (h \Gradh^{(j)} u_h)_{\hat{\sigma}}^-  & \mbox{for Cases 1 and 3}, \\
\abs{( R_V u_h )^{+}_{\sigma}}^2 \cdot   (h \Gradh^{(j)} u_h)_{\hat{\sigma}}^+   & \mbox{for Cases 2 and 5}, \\
0 & \mbox{for Case 4}.
 \end{cases}
\end{align*}}

\RevTwo{
Combining with \eqref{es-grad}, \eqref{minmod-max-1} and Lemma \ref{es-D1} we finally obtain the estimates of $D_{\sigma} $ and $D_{\sigma,\hat{\sigma}}, \hat{\sigma} \neq \sigma$.}
\begin{Lemma}\label{es-D}
For a numerical solution $u_h$ obtained by the finite volume method with the GRP numerical flux \eqref{fGRP} we have
\begin{align*}
& \frac1{24} \jump{u_h} \leq  D_{\sigma} \leq  \frac{1}{12} \abs{\jump{u_h}} + \frac78 \RevOne{\max(\abs{u_K},\abs{u_L})}+ \frac{\TS}{2h} \RevOne{\max(u_K^2, u_L^2}),\quad
  \abs{D_{\sigma,\hat{\sigma}}} \leq \frac{\TS}{2h} \RevOne{\max(u_K^2,u_L^2)}
\end{align*}
\RevOne{with $\sigma:=K|L \in  \facesKi$, $\hat{\sigma} \in \facesKj, j \neq i$ and $\sigma \cap\hat{\sigma} \neq \emptyset$.}
\end{Lemma}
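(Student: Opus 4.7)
The plan is to split $D_\sigma = D_\sigma^{(1)} + D_\sigma^{(2)}$ as in the decomposition \eqref{fGRP-1}, import the bounds on $D_\sigma^{(1)}$ from Lemma~\ref{es-D1}, and then establish corresponding bounds on $D_\sigma^{(2)}$ and on the cross-coupling term $D_{\sigma,\hat\sigma}$ directly from the structural properties \eqref{es-grad} and \eqref{minmod-max-1} of the reconstruction $R_V$ and the minmod-limited slope $\Gradh u_h$.

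The key observation, which is the only delicate point in the argument, is that the limiter forces $D_\sigma^{(2)}\ge 0$. Indeed, recall that $D_\sigma^{(2)}\jump{u_h}_{\sigma}$ is defined as $\frac{\TS}{2h}\abs{(R_Vu_h)^{\pm}_{\sigma}}^2\,(h\Gradh^{(i)}u_h)_{\sigma}^{\pm}$, where the sign $\pm$ and the side of the interface are selected case by case in \eqref{fGRP-1}. Since $(h\Gradh^{(i)}u_h)_{K},(h\Gradh^{(i)}u_h)_{L}\in\co{0}{\jump{u_h}_{\sigma}}$ by \eqref{es-grad-1}, the factor $(h\Gradh^{(i)}u_h)_\sigma^{\pm}$ shares the sign of $\jump{u_h}_\sigma$, so the numerator of $D_\sigma^{(2)}$ has the same sign as its denominator and $D_\sigma^{(2)}\ge 0$. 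Combined with the lower bound $\frac{1}{24}\jump{u_h}\le D_\sigma^{(1)}$ of Lemma~\ref{es-D1}, this immediately yields the lower bound $\frac{1}{24}\jump{u_h}\le D_\sigma$.

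For the upper bound on $D_\sigma^{(2)}$, I would use the two-sided inclusion $\abs{(h\Gradh^{(i)}u_h)_{\sigma}^{\pm}}\le \abs{\jump{u_h}_\sigma}$ from \eqref{es-grad-1} together with $\abs{(R_Vu_h)^{\pm}_{\sigma}}^2\le \max(u_K^2,u_L^2)$ from \eqref{minmod-max-1}, obtaining
\[
0\le D_\sigma^{(2)}=\frac{\TS}{2h}\,\abs{(R_Vu_h)^{\pm}_{\sigma}}^2\,\frac{(h\Gradh^{(i)}u_h)^{\pm}_\sigma}{\jump{u_h}_\sigma}\le \frac{\TS}{2h}\max(u_K^2,u_L^2).
\]
Adding this to the upper bound $D_\sigma^{(1)}\le \tfrac{1}{12}\abs{\jump{u_h}}+\tfrac{7}{8}\max(\abs{u_K},\abs{u_L})$ from Lemma~\ref{es-D1} produces exactly the announced upper bound on $D_\sigma$.

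Finally, for the cross-coupling coefficient $D_{\sigma,\hat\sigma}$ with $\hat\sigma\in\facesKj$, $j\neq i$, $\sigma\cap\hat\sigma\neq\emptyset$, the same computation applies with the denominator $\jump{u_h}_{\hat\sigma}$ replacing $\jump{u_h}_\sigma$: since $(h\Gradh^{(j)}u_h)^{\pm}_{\hat\sigma}\in\co{0}{\jump{u_h}_{\hat\sigma}}$ by \eqref{es-grad-1} applied on $\hat\sigma$, and $\abs{(R_Vu_h)^{\pm}_\sigma}^2\le \max(u_K^2,u_L^2)$ by \eqref{minmod-max-1}, we obtain $\abs{D_{\sigma,\hat\sigma}}\le \frac{\TS}{2h}\max(u_K^2,u_L^2)$, which completes the proof. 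There is no real obstacle here beyond the sign bookkeeping in the first step; the remaining estimates are direct consequences of the minmod inclusions and the reconstruction bounds already established.
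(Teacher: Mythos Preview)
Your proposal is correct and follows exactly the route the paper takes: the paper's own argument for this lemma consists of nothing more than the one-line remark ``Combining with \eqref{es-grad}, \eqref{minmod-max-1} and Lemma~\ref{es-D1} we finally obtain the estimates of $D_{\sigma}$ and $D_{\sigma,\hat{\sigma}}$,'' and your write-up simply unpacks that combination, including the sign observation that $(h\Gradh^{(i)}u_h)_\sigma^{\pm}\in\co{0}{\jump{u_h}_\sigma}$ forces $D_\sigma^{(2)}\ge 0$.
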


\begin{Remark}
Note that the upper bound of $D_{\sigma}$ can be formulated with the functions $\avs{u_h}$ and $\jump{u_h}$
\begin{align*}
D_{\sigma} \aleq \abs{\avs{u_h}} + \abs{\jump{u_h}}  + \frac{\TS}{h} \left( \abs{\avs{u_h}}^2 + \abs{\jump{u_h}}^2 \right),
\end{align*}
since $\RevOne{\max(\abs{u_K},\abs{u_L})}  \leq \abs{\avs{u_h}} + \frac12 \abs{\jump{u_h}}$.
Furthermore, we can observe that, the upper bound of $D_{\sigma} $ is almost the same as that of $D_{\sigma}^{RP}$, cf. \eqref{es-DRP}.

\

However, the lower bounds of  $D_{\sigma} $ and $D_{\sigma}^{RP}$ are different. This is because the GRP flux is a second order approximation of $f(u)$ while the RP flux  is first order.
\end{Remark}

\subsection{Stabilized scheme}\label{sec-Mod}
For $\jump{u_h}<0$ we need to add an artificial diffusion term to the GRP flux. Our new GRP scheme can be formulated as follows
\begin{subequations}\label{scheme}
\begin{align}\label{GRP-2}
& \intTd{ \phi_h  D_t u_h}-  \intfacesint{ f_{\sigma}^{GRP,New}\jump{\phi_h}}= 0,  \\ \label{flux-2}
& f_{\sigma}^{GRP,New} = \begin{cases}
f_{\sigma}^{GRP}  & \ \mbox{if} \ \jump{u_h} \geq 0, \\
f_{\sigma}^{GRP} + \left(\frac{1}{24}+C_1\right) \abs{\jump{u_h}}^2 & \ \mbox{if} \ \jump{u_h} < 0,
\end{cases}\quad u_h(0,x) = \Piq u_0,
\end{align}
\end{subequations}
\RevOne{where $C_1$ is a constant satisfying $C_1\in(0, 1/24]$. Note that $C_1$ is independent on $\TS$ and $h$.}
Denoting
\begin{align}\label{flux-new}
D_{\sigma}^{New} \jump{u_h}  :=   -f_{\sigma}^{GRP,New} + f_{\sigma}^{EC}(u_h) - \sum_{j\neq i} D_{\sigma,\hat{\sigma}} \jump{u_h} _{\hat{\sigma}}
\end{align}
from Lemma \ref{es-D} we obtain  the following estimate 
\begin{align}\label{es-D-1}
\RevOne{C_1 \cdot \abs{\jump{u_h}} \leq D_{\sigma}^{New} \leq \frac{1}{6} \abs{\jump{u_h}} + \frac78 \RevOne{\max(\abs{u_K},\abs{u_L})}+ \frac{\TS}{2h} \RevOne{\max(u_K^2, u_L^2}).}
\end{align}
Accordingly, the modification \eqref{flux-2} may lead to the entropy stability of the new GRP method. Hereafter we analyze this new stabilized GRP method \eqref{scheme}.

\subsection{Entropy stability}\label{sec-ES}
Before analyzing the discrete entropy inequality of the new stabilized GRP scheme \eqref{scheme}, we formulate a physically reasonable assumption
\begin{align}\label{assumption}
0 \leq \abs{u_h} \leq \Ov{u} \quad \quad \mbox{ uniformly for } \quad \TS,h \to 0.
\end{align}
Consequently, it holds
\begin{align}\label{assumption-1}
\abs{( R_V u_h )^{\pm}_{\sigma}} \leq \Ov{u}, \quad
\abs{D_{\sigma}^{New}} \leq  \frac{29}{24} \Ov{u}+ \frac{\TS}{2 h} \Ov{u}^2, \quad
 \abs{D_{\sigma,\hat{\sigma}} } \leq \frac{\TS}{2 h} \Ov{u}^2.
\end{align}

Further, let us start by deriving the entropy balance of the \RevTwo{stabilized} GRP scheme \eqref{scheme}.

\begin{Lemma}[Entropy balance]\label{lem-EB}
Let $u_h$ be the numerical solution of the \RevTwo{stabilized} GRP method \eqref{scheme}. Then it holds for all $\varphi_h \in Q_h$
\begin{align}\label{EB}
& \intTd{ \varphi_h  D_t \eta_h}- \intfacesint{ q_{\sigma}^{GRP,New} \jump{\varphi_h} } =  - \intfacesint{ D_{\sigma}^{New}  \jump{u_h}^2 \avs{\varphi_h}} \br
&- \RevTwo{ \sum_{i=1}^d\int_{\sigma\in\mathcal{E}_i} \sum_{j\neq i, \hat{\sigma}\in \mathcal{E}_j} } D_{\sigma,\hat{\sigma}} \jump{u_h}_{\sigma} \jump{u_h}_{\hat{\sigma}}  \avs{\varphi_h} \ds+ \frac{\TS}{2 |K|} \sum_{K} \varphi_K \left(\sum_{\sigma \in \facesK} |\sigma| \left( f_{\sigma}^{GRP} - f(u_K) \right)\I_{d\times 1} \cdot \vn_K  \right)^2
\end{align}
with
\begin{align*}
\eta_h = \eta(u_h),\quad \psi_h = \psi(u_h), \quad \psi = u^3/6,\quad q_{\sigma}^{GRP,New} = \avs{u_h} f_{\sigma}^{GRP,New} - \avs{\psi_h}.
\end{align*}
\RevTwo{Note that $\hat{\sigma}$ is a neighbour of $\sigma$ in the sense that $\sigma \cap \hat{\sigma} \neq \emptyset$.}
\end{Lemma}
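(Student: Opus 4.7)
The strategy is to test the weak form \eqref{GRP-2} against $u_h\varphi_h$ (instead of $\varphi_h$), giving
\[
\intTd{\varphi_h\,u_h\,D_t u_h} \;=\; \intfacesint{f_\sigma^{GRP,New}\,\jump{u_h\varphi_h}}.
\]
On the left, I would use the discrete chain rule $u_h\,D_t u_h = D_t\eta_h - \tfrac{\Delta t}{2}(D_t u_h)^2$, which follows from $a(b-a) = \tfrac{1}{2}(b^2 - a^2) - \tfrac{1}{2}(b-a)^2$; this isolates $\intTd{\varphi_h\,D_t\eta_h}$ and generates the quadratic remainder $-\tfrac{\Delta t}{2}\intTd{\varphi_h\,(D_t u_h)^2}$. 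Substituting the scheme $D_t u_h = -|K|^{-1}\sum_{\sigma\in\facesK}|\sigma|\,f_\sigma^{GRP,New}\,\I_{d\times 1}\cdot\vn_K$ into that remainder, and exploiting $\sum_{\sigma\in\facesK}|\sigma|\vn_K = 0$ to center the bracket around $f(u_K)$, produces the last term of \eqref{EB}.

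On the right, I would apply the jump product rule $\jump{u_h\varphi_h} = \avs{u_h}\jump{\varphi_h} + \jump{u_h}\avs{\varphi_h}$, and then insert the entropy-conservative decomposition \eqref{flux-new},
\[
f_\sigma^{GRP,New} \;=\; f_\sigma^{EC}(u_h)\;-\;D_\sigma^{New}\,\jump{u_h}_\sigma\;-\;\sum_{j\neq i}D_{\sigma,\hat\sigma}\,\jump{u_h}_{\hat\sigma}.
\]
For the EC part I would use the Tadmor identity $f_\sigma^{EC}(u_h)\,\jump{u_h} = \jump{\psi_h}$ with $\psi_h = u_h^3/6$, verifiable directly from \eqref{EC-flux} via
\[
\jump{u_h}\cdot\tfrac{1}{24}\bigl(12\avs{u_h}^2 + \jump{u_h}^2\bigr) \;=\; \tfrac{1}{6}\bigl((u_h^+)^3 - (u_h^-)^3\bigr).
\]
Combined with the product rule applied backwards and the periodic telescoping $\intfacesint{\jump{\psi_h\varphi_h}} = 0$ (which holds because $\psi_h\varphi_h \in Q_h$), the $\jump{u_h}\avs{\varphi_h}$ piece of the EC flux converts into $-\intfacesint{\avs{\psi_h}\jump{\varphi_h}}$; this merges with $\intfacesint{\avs{u_h}\,f_\sigma^{EC}\,\jump{\varphi_h}}$ and with the $\jump{\varphi_h}$-contributions of the dissipative terms to form precisely $\intfacesint{q_\sigma^{GRP,New}\,\jump{\varphi_h}}$. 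The $\avs{\varphi_h}$-contributions of the dissipative terms then deliver $-\intfacesint{D_\sigma^{New}\jump{u_h}^2\avs{\varphi_h}}$ together with the cross-direction dissipation appearing in \eqref{EB}.

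\textbf{Main obstacle.} The principal difficulty lies in the bookkeeping of the second step: one must track the orientation conventions for $\jump{\cdot}$ and $\avs{\cdot}$ consistently and check that the Tadmor identity together with the periodic telescoping and the backward use of the product rule recombines the EC contribution with the $\jump{\varphi_h}$-pieces of the dissipative terms into the \emph{full} entropy flux $q_\sigma^{GRP,New} = \avs{u_h}f_\sigma^{GRP,New} - \avs{\psi_h}$, and not only the EC entropy flux $\avs{u_h}f_\sigma^{EC} - \avs{\psi_h}$. A secondary subtlety in the first step is to recognize that, after the centered shift by $f(u_K)$ inside the quadratic remainder, the contribution of the added viscosity $(\tfrac{1}{24}+C_1)|\jump{u_h}|^2$ is absorbed consistently with the form $f_\sigma^{GRP} - f(u_K)$ displayed in \eqref{EB}.
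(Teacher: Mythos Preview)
Your proposal is correct and follows exactly the paper's own argument: test \eqref{GRP-2} with $u_h\varphi_h$, apply the discrete chain rule $u_hD_t u_h = D_t\eta_h - \tfrac{\Delta t}{2}(D_t u_h)^2$ for the time term, and on the face integrals use the product rule $\jump{u_h\varphi_h}=\avs{u_h}\jump{\varphi_h}+\jump{u_h}\avs{\varphi_h}$, the decomposition \eqref{flux-new}, the Tadmor identity $f_\sigma^{EC}(u_h)\jump{u_h}=\jump{\psi_h}$, and the periodic telescoping $\intfacesint{\jump{\psi_h\varphi_h}}=0$. Regarding your secondary subtlety: the appearance of $f_\sigma^{GRP}$ rather than $f_\sigma^{GRP,New}$ in the last term of \eqref{EB} is a typo in the paper (note that the proof itself substitutes $f_\sigma^{GRP,New}$ for $D_t u_h$, and the very next lemma estimates the same quadratic term with $f_\sigma^{GRP,New}$), so there is nothing to ``absorb''.
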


\begin{proof}
Taking the test function in \eqref{GRP-2} as $\varphi_h u_h$ we have
\begin{align*}
\intTd{ \varphi_h  u_h D_t u_h}-  \intfacesint{ f_{\sigma}^{GRP,New}\jump{\varphi_h u_h}}=  0.
\end{align*}
On the one hand,  with \eqref{flux-new} and $\intfacesintB{ \avs{\psi_h} \jump{\varphi_h} +\jump{\psi_h} \avs{\varphi_h}  } = 0$, $\jump{\psi_h} =  f_{\sigma}^{EC}(u_h) \jump{u_h}$  we reformulate $\intfacesint{ \RevTwo{f_{\sigma}^{GRP,New}}\jump{\varphi_h u_h}}$ as
\begin{align*}
&\intfacesint{ \RevTwo{f_{\sigma}^{GRP,New}}\jump{\varphi_h u_h}}= \intfacesint{ f_{\sigma}^{GRP,New}(\jump{\varphi_h} \avs{u_h} + \avs{\varphi_h} \jump{u_h})} \\
=& \intfacesint{ q_{\sigma}^{GRP,New} \jump{\varphi_h} } + \intfacesint{ \avs{\psi_h} \jump{\varphi_h} } + \intfacesint{ f_{\sigma}^{EC}(u_h)  \jump{u_h} \avs{\varphi_h}} \\
&- \intfacesint{ D_{\sigma}^{New}  \jump{u_h}^2 \avs{\varphi_h}} - \RevTwo{ \sum_{i=1}^d\int_{\sigma\in\mathcal{E}_i} \sum_{j\neq i, \hat{\sigma}\in \mathcal{E}_j} } D_{\sigma,\hat{\sigma}} \jump{u_h}_{\sigma}  \jump{u_h}_{\hat{\sigma}} \avs{\varphi_h} \ds \\
=& \intfacesint{ q_{\sigma}^{GRP,New} \jump{\varphi_h} } + \intfacesint{ \left( -\jump{\psi_h} + f_{\sigma}^{EC}(u_h)  \jump{u_h} \right) \avs{\varphi_h}} \\
&- \intfacesint{ D_{\sigma}^{New}  \jump{u_h}^2 \avs{\varphi_h}} - \RevTwo{ \sum_{i=1}^d\int_{\sigma\in\mathcal{E}_i} \sum_{j\neq i, \hat{\sigma}\in \mathcal{E}_j} } D_{\sigma,\hat{\sigma}} \jump{u_h}_{\sigma} \jump{u_h}_{\hat{\sigma}}  \avs{\varphi_h} \ds \\
=& \intfacesint{ q_{\sigma}^{GRP,New} \jump{\varphi_h} } - \intfacesint{ D_{\sigma}^{New}  \jump{u_h}^2 \avs{\varphi_h}} - \RevTwo{ \sum_{i=1}^d\int_{\sigma\in\mathcal{E}_i} \sum_{j\neq i, \hat{\sigma}\in \mathcal{E}_j} } D_{\sigma,\hat{\sigma}} \jump{u_h}_{\sigma} \jump{u_h}_{\hat{\sigma}}   \avs{\varphi_h} \ds.
\end{align*}
On the other hand, we rewrite \eqref{GRP-2} as
\begin{align}
&D_t u_h =  - \sum_{\sigma \in \facesK} \frac{|\sigma|}{|K|} \left( f_{\sigma}^{GRP,New} - f(u_K) \right)\I_{d\times 1} \cdot \vn_K  
\end{align}
and then get
\begin{align*}
\intTd{\varphi_h(D_t \eta_h - u_h D_t u_h)} & =   \sum_{K} |K| \cdot \frac{\TS}{2 }\abs{D_t u_K}^2 \varphi_K \br
&= \frac{\TS}{2 } \sum_{K} |K| \varphi_K \left(\sum_{\sigma \in \facesK} \frac{|\sigma|}{|K|} \left( f_{\sigma}^{GRP} - f(u_K) \right)\I_{d\times 1} \cdot \vn_K  \right)^2.
\end{align*}
Combining both we finish the proof.
\end{proof}

\

Next, we derive the uniform bounds from the entropy balance \eqref{EB}.

\begin{Lemma}[Uniform bounds]\label{lem-UB}
Let $u_h$ be the numerical solution of the \RevTwo{stabilized} GRP method \eqref{scheme} with
\begin{align}\label{assumption-h-dt}
(\TS,h) \in (0,1]^2, \quad \TS \leq  C_2 \, h^{4/3},  \quad  C_2 \mbox{ is a constant independent of } \TS \mbox{ and } h.
\end{align}
Under assumption \eqref{assumption}  it holds
\begin{align}\label{UB1}
\intTaufacesint{  \abs{\jump{u_h}}^3 }  \aleq 1.
\end{align}
\end{Lemma}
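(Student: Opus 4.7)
The plan is to choose $\varphi_h\equiv 1$ in the entropy balance \eqref{EB}, integrate in time over $(0,\tau)$, and use the lower bound $D_\sigma^{New}\geq C_1|\jump{u_h}|$ from \eqref{es-D-1} to extract the desired cubic quantity. Since $\jump{1}=0$ and $\avs{1}=1$, the balance collapses to
\begin{equation*}
\intTor{\eta_h(\tau)} + \int_0^\tau \intfacesint{D_\sigma^{New}\jump{u_h}^2}\,dt = \intTor{\eta_h(0)} - \int_0^\tau R_1\,dt + \int_0^\tau R_2\,dt,
\end{equation*}
where $R_1$ collects the cross-face contributions weighted by $D_{\sigma,\hat\sigma}$ and $R_2\geq 0$ is the forward-Euler time-error from Lemma~\ref{lem-EB}. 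Since $\eta_h(\tau)\geq 0$ and $\intTor{\eta_h(0)}\leq \norm{u_0}_\infty^2/2$, it remains to control $|R_1|$ and $R_2$.

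Next I would show that both remainders are dominated by $\frac{\Delta t}{h}\Ov{u}^2\int_0^\tau\intfacesint{\jump{u_h}^2}\,dt$. For $R_1$ this follows from $|D_{\sigma,\hat\sigma}|\leq \frac{\Delta t}{2h}\Ov{u}^2$ in \eqref{assumption-1}, the elementary inequality $|\jump{u_h}_\sigma\jump{u_h}_{\hat\sigma}|\leq\tfrac12(\jump{u_h}_\sigma^2+\jump{u_h}_{\hat\sigma}^2)$, and the finite number of neighbouring faces. For $R_2$, using $\sum_{\sigma\in\facesK}|\sigma|\vn_K=0$, I rewrite the scheme as
\begin{equation*}
|K|\,D_t u_K = -\sum_{\sigma\in\facesK}|\sigma|\bigl(f_\sigma^{GRP,New}-f(u_K)\bigr)\I_{d\times 1}\cdot\vn_K,
\end{equation*}
decompose the flux difference via \eqref{fGRP-1}--\eqref{flux-new} together with the closed form $f_\sigma^{EC}(u_h)-f(u_K)=\jump{u_h}_\sigma(u_L+2u_K)/6$, and invoke \eqref{assumption-1} to obtain $|f_\sigma^{GRP,New}-f(u_K)|\aleq \Ov{u}\sum_{\sigma'\in\facesK}|\jump{u_h}_{\sigma'}|$. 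Squaring, applying Cauchy--Schwarz and summing over $K$ give the same $\frac{\Delta t}{h}\Ov{u}^2$ control on $R_2$.

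The main obstacle is the final absorption, since the right-hand side is only quadratic in $\jump{u_h}$ while the target is cubic. I will apply Young's inequality with exponents $(3/2,3)$ pointwise on each face,
\begin{equation*}
\frac{\Delta t}{h}\jump{u_h}^2 \leq \delta\,|\jump{u_h}|^3 + C_\delta\left(\frac{\Delta t}{h}\right)^{3},
\end{equation*}
and integrate over $(0,\tau)\times\faces$. Using the scaling $|\faces|\aleq h^{-1}$, the residual produced by the second term is
\begin{equation*}
C_\delta\,\tau\,h^{-1}\left(\frac{\Delta t}{h}\right)^{3} = C_\delta\,\tau\,\frac{\Delta t^3}{h^4} \leq C_\delta\,\tau\,C_2^3,
\end{equation*}
which is bounded precisely under the CFL restriction $\Delta t\leq C_2h^{4/3}$ from \eqref{assumption-h-dt}; meanwhile the $\delta$-term is absorbed into the left-hand side by choosing $\delta$ so that $\delta\,C(\Ov{u})\leq C_1/2$. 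The delicate point is that the exponent $4/3$ is exactly critical: the scaling $\Delta t^3/h^4$ forces the CFL in \eqref{assumption-h-dt}, and any weaker restriction would make the Young remainder blow up as $h\to 0$.
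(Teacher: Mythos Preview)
Your proposal is correct and follows essentially the same route as the paper's proof: set $\varphi_h\equiv 1$ in the entropy balance, use $D_\sigma^{New}\geq C_1|\jump{u_h}|$ for the dissipation, bound both the cross-face term and the forward-Euler time error by $\tfrac{\Delta t}{h}\int_{\faces}\jump{u_h}^2$, and absorb the quadratic term into the cubic one via Young's inequality with exponents $(3/2,3)$ under the CFL $\Delta t\leq C_2 h^{4/3}$. The only cosmetic difference is that the paper first applies H\"older, $\int_{\faces}\jump{u_h}^2\leq(\int_{\faces}1)^{1/3}(\int_{\faces}|\jump{u_h}|^3)^{2/3}\aleq h^{-1/3}(\int_{\faces}|\jump{u_h}|^3)^{2/3}$, and then Young on the resulting scalar, whereas you apply Young pointwise and integrate; the two computations are equivalent and yield the same critical scaling $\Delta t^3/h^4\leq C_2^3$.
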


\begin{proof}
Taking the test function in \eqref{EB} as $\varphi_h \equiv 1$ we have
\begin{align}\label{EB-1}
\intTd{ D_t \eta_h}&=  - \intfacesint{ D_{\sigma}^{New}  \jump{u_h}^2 } - \RevTwo{ \sum_{i=1}^d\int_{\sigma\in\mathcal{E}_i} \sum_{j\neq i, \hat{\sigma}\in \mathcal{E}_j} } D_{\sigma,\hat{\sigma}} \jump{u_h}_{\sigma} \jump{u_h}_{\hat{\sigma}} \ds \br
&+ \frac{\TS}{2 |K|} \sum_{K} \left(\sum_{\sigma \in \facesK} |\sigma| \left( f_{\sigma}^{GRP} - f(u_K) \right)\I_{d\times 1} \cdot \vn_K  \right)^2.
\end{align}
The first term on the right hand side of \eqref{EB-1} can be estimated as
\begin{align}
\intfacesint{ D_{\sigma}^{New}  \jump{u_h}^2 } \geq  C_1 \intfacesint{ \abs{\jump{u_h}}^3}.
\end{align}
Thanks to \eqref{assumption-1} we control the second term with
\begin{align*}
&\Abs{ \RevTwo{ \sum_{i=1}^d\int_{\sigma\in\mathcal{E}_i} \sum_{j\neq i, \hat{\sigma}\in \mathcal{E}_j} } D_{\sigma,\hat{\sigma}} \jump{u_h}_{\sigma} \jump{u_h}_{\hat{\sigma}} \ds }
\aleq \frac{\TS}{h}   \intfacesint{  \Abs{\jump{u_h}_{\sigma}}^2 }.
\end{align*}
\RevTwo{With \eqref{flux-new}, \eqref{assumption-1}} and
\begin{align*}
\abs{f_{\sigma}^{EC}(u_h)- f(u_K)} = \Abs{\frac{-\jump{u_h}+6 \avs{u_h}}{12} \jump{u_h}} \aleq \abs{\jump{u_h}},\quad
\abs{f_{\sigma}^{EC}(u_h)- f(u_L)} \aleq \abs{\jump{u_h}}
\end{align*}
we control the third term in the following way
\begin{align*}
&\frac{\TS}{2 |K|} \sum_{K} \left(\sum_{\sigma \in \facesK} |\sigma| \left( f_{\sigma}^{GRP,New} - f(u_K) \right)\I_{d\times 1} \cdot \vn_K  \right)^2 \br
\aleq \ & \frac{\TS}{ |K|} \sum_{K} \left( \sum_{\sigma \in \facesK}|\sigma|\left( f_{\sigma}^{GRP,New} - f(u_K)  \right)^2 \right) \cdot  \left( \sum_{\sigma \in \facesK}|\sigma| \right) \br
\RevTwo{\aleq} \ & \RevTwo{\frac{\TS}{ h} \sum_{K} \left( \sum_{\sigma \in \facesK}|\sigma|\left( f_{\sigma}^{GRP,New} - f(u_K)  \right)^2 \right)  }\br
\RevTwo{\aleq} \ & \RevTwo{\frac{\TS}{ h} \intfacesintB{ \left( f_{\sigma}^{GRP,New}- f(u_K)  \right)^2 + \left( f_{\sigma}^{GRP,New}- f(u_L)  \right)^2 } }\br
\aleq \ &  \frac{\TS}{h} \intfacesintB{ \left( f_{\sigma}^{GRP,New} - f_{\sigma}^{EC}(u_h) \right)^2 + \left( f_{\sigma}^{EC}(u_h)- f(u_K)  \right)^2 + \left( f_{\sigma}^{EC}(u_h)- f(u_L)  \right)^2 } \br
\aleq \ &  \frac{\TS}{h} \intfacesint{ \left[ \left(  \left(1+\frac{\TS}{h} \right) \jump{u_h} \right)^2  + \jump{u_h}^2  \right]}
\aleq \frac{\TS}{ h} \left(1+\frac{\TS}{h} \right)^2 \intfacesint{  \jump{u_h}^2 }.
\end{align*}
\RevTwo{Thus, we obtain from \eqref{EB-1} and  \eqref{assumption-h-dt} the following estimate
\begin{align*}
\intTd{ D_t \eta_h}&\leq - C_1\intfacesint{  \abs{ \jump{u_h}}^3 } +  C_3\frac{\TS}{h}  \intfacesint{  \Abs{\jump{u_h}_{\sigma}}^2 },
\end{align*}
where $C_3$ is a positive constant only dependent on $\Ov{u}$ and $C_2$.

Further, applying Young's inequality we have
\begin{align*}
C_3 \frac{\TS}{h}  \intfacesint{  \Abs{\jump{u_h}_{\sigma}}^2 }
&\aleq C_3 \frac{\TS}{h} \left( \intfacesint{ 1 }   \right)^{1/3}\left( \intfacesint{  \Abs{\jump{u_h}_{\sigma}}^3 }   \right)^{2/3} \aleq C_3\frac{\TS}{h^{4/3}} \left( \intfacesint{  \Abs{\jump{u_h}_{\sigma}}^3 }   \right)^{2/3} \br
&\RevTwo{\leq C_4\frac{\TS}{h^{4/3}} \left( \frac1{\delta^2}+ \delta\intfacesint{  \Abs{\jump{u_h}_{\sigma}}^3 }   \right),}
\end{align*}
where $C_4$ is a positive constant only dependent on $C_3$, and $\delta$ is a positive constant independent on $\TS$ and $h$.
Setting $\delta = C_1/(2C_2 \cdot C_4)$ we obtain $C_4\frac{\TS}{h^{4/3}} \delta \leq C_1/2$ and
\begin{align*}
\intTd{ D_t \eta_h} &\leq \frac{4C_2^2C_4^2}{C_1^2} \cdot \frac{\TS}{h^{4/3}} - \frac{C_1}2 \intfacesint{  \abs{ \jump{u_h}}^3 } \aleq \frac{\TS}{h^{4/3}} -  \intfacesint{  \abs{ \jump{u_h}}^3 } ,
\end{align*}
}
which gives
\begin{align*}
\intTaufacesint{  \abs{ \jump{u_h}}^3 } &\aleq  \frac{\TS \cdot T}{h^{4/3}} + \intTd{ \eta_h(0,\cdot)} \aleq 1
\end{align*}
and concludes the proof.
\end{proof}

%

As a byproduct we obtain the entropy inequality.
\begin{Corollary}[Entropy inequality]\label{lem-EI}
Let $u_h$ be the numerical solution of the \RevTwo{stabilized} GRP method \eqref{scheme}. Under  assumptions \eqref{assumption} and  \eqref{assumption-h-dt} it holds for all $\varphi_h \in Q_h, \varphi_h \geq 0$
\begin{align}\label{EI}
& \intTd{ \varphi_h  D_t \eta_h}- \intfacesint{ q_{\sigma}^{GRP,New} \jump{\varphi_h} } \aleq \frac{\TS }{h^{4/3}}.
\end{align}
\end{Corollary}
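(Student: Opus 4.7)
The plan is to adapt the proof of Lemma~\ref{lem-UB} (which is the case $\varphi_h\equiv 1$) to a general non-negative test function $\varphi_h$. Starting from the entropy balance of Lemma~\ref{lem-EB}, the three terms on the right-hand side are to be estimated, carrying the weight $\varphi_h$ through each.

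First, the diffusive term $-\intfacesint{D_{\sigma}^{New}|\jump{u_h}|^2 \avs{\varphi_h}}$ is non-positive, since $D_{\sigma}^{New}\geq C_1|\jump{u_h}|\geq 0$ by \eqref{es-D-1} and $\avs{\varphi_h}\geq 0$ (as $\varphi_h\geq 0$); this supplies the correct inequality direction if we drop it, and it also provides the coercive term needed later for absorption. Next, the cross-diffusion and forward-Euler defect terms are treated exactly as in the proof of Lemma~\ref{lem-UB}: using $|D_{\sigma,\hat{\sigma}}|\leq \tfrac{\TS}{2h}\Ov{u}^2$ from Lemma~\ref{es-D}, the AM--GM inequality $2|ab|\leq a^2+b^2$ on the product $\jump{u_h}_{\sigma}\jump{u_h}_{\hat{\sigma}}$, the observation that each face has $O(1)$ neighbours, and the triangle-inequality bound $|f_{\sigma}^{GRP,New}-f(u_{K/L})|\aleq (1+\TS/h)|\jump{u_h}|$ via the intermediate entropy-conservative flux $f_{\sigma}^{EC}$, combined with $\TS/h\leq C_2 h^{1/3}\leq C_2$. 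Since the averages $\avs{\varphi_h}$ appearing in both terms are majorized by $\norm{\varphi_h}_{\infty}$, all these estimates produce a bound of the form $\norm{\varphi_h}_{\infty}\tfrac{\TS}{h}\intfacesint{|\jump{u_h}|^2}$.

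The final step is the same H\"older--Young manipulation used at the end of the proof of Lemma~\ref{lem-UB}: by H\"older's inequality
\[
\intfacesint{|\jump{u_h}|^2}\leq \left(\intfacesint{1}\right)^{1/3}\left(\intfacesint{|\jump{u_h}|^3}\right)^{2/3}\aleq h^{-1/3}\left(\intfacesint{|\jump{u_h}|^3}\right)^{2/3},
\]
followed by Young's inequality with parameter $\delta\sim h^{4/3}/\TS$: the ``constant'' part of the resulting split is of size $\TS^3/h^4\aleq \TS/h^{4/3}$ after invoking $\TS\leq C_2 h^{4/3}$, and the ``linear'' part $\delta\intfacesint{|\jump{u_h}|^3}$ is absorbed by the diffusive term from the first step (kept rather than dropped). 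The main obstacle lies precisely in this absorption: the good term carries the weight $\avs{\varphi_h}$ while Young's inequality furnishes the unweighted $\intfacesint{|\jump{u_h}|^3}$, so the absorption is valid only up to a multiplicative factor of $\norm{\varphi_h}_{\infty}$, which is swallowed into the hidden constant of $\aleq$ in the final bound $\aleq \TS/h^{4/3}$.
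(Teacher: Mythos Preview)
Your outline follows the paper's intended argument (repeat the estimates from the proof of Lemma~\ref{lem-UB} with the weight $\varphi_h$), and the paper itself gives no more than the word ``byproduct''. However, your last paragraph contains a genuine gap, not a cosmetic one. You bound the cross--diffusion and Euler--defect terms by
\[
\norm{\varphi_h}_{\infty}\cdot C_3\,\frac{\TS}{h}\intfacesint{|\jump{u_h}|^2},
\]
then apply H\"older--Young to produce an \emph{unweighted} remainder $\delta\intfacesint{|\jump{u_h}|^3}$, and finally claim this can be absorbed into the good term $C_1\intfacesint{\avs{\varphi_h}\,|\jump{u_h}|^3}$ ``up to a multiplicative factor of $\norm{\varphi_h}_{\infty}$''. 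This is the wrong direction: absorption requires $\avs{\varphi_h}\geq c>0$ on every face where $\jump{u_h}\neq 0$, i.e.\ a \emph{lower} bound on the weight. An upper bound $\avs{\varphi_h}\leq\norm{\varphi_h}_{\infty}$ only makes the good term \emph{smaller}, not larger. Since $\varphi_h\in Q_h$, $\varphi_h\geq 0$, is allowed to vanish on arbitrary cells, the absorption as written fails.

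The remedy is to keep the weight $\avs{\varphi_h}$ inside the bad terms rather than pulling out $\norm{\varphi_h}_{\infty}$ first, and to run H\"older--Young with respect to the measure $\avs{\varphi_h}\,{\rm d}S_x$; then the Young remainder is $\delta\intfacesint{\avs{\varphi_h}|\jump{u_h}|^3}$ and absorbs directly. For the Euler--defect term this is immediate: since $\varphi_h\geq 0$ one has $\varphi_K\leq 2\avs{\varphi_h}_{\sigma}$ for every $\sigma\in\faces(K)$, so the bound becomes $\aleq\tfrac{\TS}{h}\intfacesint{\avs{\varphi_h}|\jump{u_h}|^2}$. For the cross--diffusion term the weight $\avs{\varphi_h}_{\sigma}$ sits on a different face than $\jump{u_h}_{\hat\sigma}$; after AM--GM and splitting $\avs{\varphi_h}_{\sigma}=\tfrac12(\varphi_K+\varphi_L)$ one must use that $\sigma$ and $\hat\sigma$ share an element to reattach the weight, which takes a little more bookkeeping than you indicate.
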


\section{Convergence}\label{Convergence}
Having shown the stability of the numerical approximations obtained by the \RevTwo{stabilized} GRP scheme \eqref{scheme} we proceed to analyze its consistency and then the convergence.

\subsection{Consistency}
Before deriving the consistency formulation we formulate a useful equality.

\begin{Lemma}\label{lem-td}
Let $\tau \in [t_n, t_{n+1})$. Then for any $r_h \in L_{\TS}(0,T), \,  \phi \in W^{1,\infty}(0,T)$  it holds
\begin{align}
&\left[ \phi r_h \right]_{t=0}^{t=\tau} - \int_0^{t_n} \left( \phi D_t r_h + r_h \partial_t \phi\right)\dt  = \int_0^{t_n} \left( D_t \phi(t-\TS) - \partial_t \phi \right) \, r_h \dt \br
&\quad \quad+  \int_{t_n}^{t_{n+1}}r_h(t_n)  \frac{\phi(\tau) -   \phi(t-\TS)}{\TS} \dt - \int_0^{\TS} r_h(t_0)  \frac{\phi(t_0)  - \phi(t-\TS) }{\TS} \dt.
\end{align}
\end{Lemma}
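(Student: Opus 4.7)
The identity is a discrete integration-by-parts formula: it relates the continuous boundary term $[\phi r_h]_0^\tau$ and the continuous integral $\int_0^{t_n}(\phi D_t r_h + r_h \partial_t \phi)\dt$ to an expression in which the discrete derivative $D_t$ acts on the shifted weight $\phi(\cdot-\TS)$ instead of on the piecewise-constant function $r_h$. The natural strategy is therefore to manipulate only the single term $\int_0^{t_n} \phi\, D_t r_h \dt$, since this is the only place where $D_t$ currently sits on $r_h$. All other pieces will turn out to match directly or produce the correct boundary corrections.

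\textbf{Step 1 (shift the discrete derivative).} Using $D_t r_h(t)=(r_h(t+\TS)-r_h(t))/\TS$ and the substitution $s=t+\TS$ in the half containing $r_h(t+\TS)$, write
\begin{align*}
\int_0^{t_n}\phi\, D_t r_h\dt
= \frac{1}{\TS}\int_{\TS}^{t_{n+1}}\phi(s-\TS)r_h(s)\,\mathrm{d}s - \frac{1}{\TS}\int_0^{t_n}\phi(t)r_h(t)\dt .
\end{align*}
Splitting $\int_{\TS}^{t_{n+1}}=\int_0^{t_n}+\int_{t_n}^{t_{n+1}}-\int_0^{\TS}$ and exploiting the piecewise-constant structure $r_h\equiv r_h(t_n)$ on $[t_n,t_{n+1})$ and $r_h\equiv r_h(t_0)$ on $[0,\TS)$ yields the key identity
\begin{align*}
\int_0^{t_n}\phi\, D_t r_h\dt
= -\int_0^{t_n} D_t \phi(t-\TS)\,r_h\dt
+ \frac{r_h(t_n)}{\TS}\int_{t_n}^{t_{n+1}}\phi(s-\TS)\,\mathrm{d}s
- \frac{r_h(t_0)}{\TS}\int_0^{\TS}\phi(s-\TS)\,\mathrm{d}s.
\end{align*}

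\textbf{Step 2 (assemble).} Substituting the previous display into $[\phi r_h]_0^\tau -\int_0^{t_n}(\phi D_t r_h + r_h\partial_t\phi)\dt$, the combination $-\phi\, D_t r_h + r_h\partial_t\phi$ produces the term $\int_0^{t_n}(D_t\phi(t-\TS)-\partial_t\phi)\,r_h\dt$ exactly as in the statement. It remains to handle the two boundary pieces. Since $r_h(\tau)=r_h(t_n)$ and $\phi(0)=\phi(t_0)$, we may rewrite $\phi(\tau)r_h(t_n)=\frac{r_h(t_n)}{\TS}\int_{t_n}^{t_{n+1}}\phi(\tau)\dt$ and $-\phi(t_0)r_h(t_0)=-\frac{r_h(t_0)}{\TS}\int_0^{\TS}\phi(t_0)\dt$. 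Combining these with the two correction integrals from Step~1 produces
\begin{align*}
\int_{t_n}^{t_{n+1}} r_h(t_n)\,\frac{\phi(\tau)-\phi(t-\TS)}{\TS}\dt
\quad\text{and}\quad
-\int_0^{\TS} r_h(t_0)\,\frac{\phi(t_0)-\phi(t-\TS)}{\TS}\dt,
\end{align*}
which are precisely the two remaining terms in the claimed identity.

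\textbf{Expected obstacle.} The only delicate point is that $\phi(t-\TS)$ formally requires values of $\phi$ on $[-\TS,0]$ when $t\in[0,\TS]$. This is not a real difficulty: $\phi(t-\TS)$ appears identically on both sides of the identity, so any consistent extension of $\phi$ past $t=0$ (for instance by the value $\phi(0)$, or by the Lipschitz extension of $\phi\in W^{1,\infty}(0,T)$) makes all integrals well-defined and cancels in Step~2. Apart from this bookkeeping, the proof is a direct chain of substitutions, and no further analytic input is needed.
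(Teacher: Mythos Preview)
Your proposal is correct and follows essentially the same route as the paper: expand $D_t r_h$, shift the variable by $\TS$ in the integral containing $r_h(t+\TS)$, split $\int_{\TS}^{t_{n+1}}=\int_0^{t_n}+\int_{t_n}^{t_{n+1}}-\int_0^{\TS}$, and then match the boundary terms by writing $\phi(\tau)r_h(t_n)$ and $\phi(t_0)r_h(t_0)$ as averages over $[t_n,t_{n+1}]$ and $[0,\TS]$ respectively. Your remark on extending $\phi$ to $[-\TS,0]$ is a point the paper leaves implicit, but otherwise the two arguments are identical.
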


\begin{proof}
With the definition of $D_t$ we have
\begin{align*}
 & \int_0^{t_n} \phi D_t r_h \dt  = \int_0^{t_n} \phi(t) \cdot \frac{r_h(t+\TS) - r_h(t)}{\TS} \dt  \br
= & \frac1{\TS} \int_0^{t_n} \phi(t) \, r_h(t+\TS) \dt  - \frac1{\TS} \int_0^{t_n} \phi(t) \, r_h(t) \dt = \frac1{\TS} \int_{\TS}^{t_{n+1}} \phi(t-\TS) \, r_h(t) \dt  - \frac1{\TS} \int_0^{t_n} \phi(t) \, r_h(t) \dt \br
= & - \int_0^{t_n} D_t \phi(t-\TS) \, r_h(t) \dt -  \frac1{\TS} \int_0^{\TS} \phi(t-\TS) \, r_h(t) \dt  + \frac1{\TS} \int_{t_n}^{t_{n+1}} \phi(t-\TS) \, r_h(t) \dt\br
= & - \int_0^{t_n} D_t \phi(t-\TS) \, r_h(t) \dt -  \frac{r_h(t_0)}{\TS} \int_0^{\TS} \phi(t-\TS)\dt  + \frac{r_h(t_n) }{\TS} \int_{t_n}^{t_{n+1}} \phi(t-\TS) \dt.
\end{align*}
Combining it with
\begin{align*}
(\phi r_h)(\tau) - \frac{r_h(t_n) }{\TS} \int_{t_n}^{t_{n+1}} \phi(t-\TS) \dt & = \phi(\tau) r_h(t_n) - \frac{r_h(t_n) }{\TS} \int_{t_n}^{t_{n+1}} \phi(t-\TS) \dt \br
& =  \int_{t_n}^{t_{n+1}} r_h(t_n)  \frac{\phi(\tau) -   \phi(t-\TS)}{\TS} \dt \br
(\phi r_h)(t_0) - \frac{r_h(t_0)}{\TS} \int_0^{\TS} \phi(t-\TS)  \dt  & = \phi(t_0) r_h(t_0)  - \frac{r_h(t_0)}{\TS} \int_0^{\TS} \phi(t-\TS)  \dt  \br
& =   \int_0^{\TS} r_h(t_0) \frac{\phi(t_0)  - \phi(t-\TS) }{\TS} \dt
\end{align*}
we finish the proof.
\end{proof}

\begin{Lemma}[Consistency I]\label{consistency-1}
Let $u_h$ be the numerical solution of the \RevTwo{stabilized} GRP method \eqref{scheme}. Under assumptions \eqref{assumption} and \eqref{assumption-h-dt}
 it holds for all $\tau \in (0,T)$ and all $\phi \in W^{2,\infty}((0,T)\times \tor)$
\begin{equation}\label{cf-eq}
\left[ \intTd{ u_h \phi} \right]_{t=0}^{t=\tau}= \intTauTdB{ u_h \partial_t \phi +   f_h \cdot \nabla_x \phi } +  e_{u}(\phi,h,\tau), \quad f_h = f(u_h).
\end{equation}
The consistency error $e_u$ satisfies
\begin{align}
\abs{e_{u}(\phi,h,\tau)} \leq C_{u} h^{1/3}.
\end{align}
Here the constant $C_u$ depends solely on $T, \norm{\phi}_{W^{2,\infty}((0,T)\times \tor)}$.		
\end{Lemma}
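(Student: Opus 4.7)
The plan is to use the weak form \eqref{GRP-2} of the stabilized GRP scheme with the test function $\varphi_h(t,\cdot) = \Piq\phi(t,\cdot)\in Q_h$, sum it over time steps in $[0,t_n]$ where $\tau\in[t_n,t_{n+1})$, and then convert the resulting discrete identity into \eqref{cf-eq}. The argument splits into three tasks: handling the time-derivative term by means of Lemma \ref{lem-td}; decomposing the face flux via \eqref{flux-new} into an entropy-conservative piece plus diffusion and recovering $f_h\cdot\nabla_x\phi$ by a discrete Green identity; and bounding every residual by the weak-BV estimate \eqref{UB1}.

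For the time-derivative, I apply Lemma \ref{lem-td} pointwise in $x$ with $r_h = u_h(\cdot,x)$ and time profile $\Piq\phi(\cdot,x)$, then integrate over $\tor$. Because $u_h\in Q_h$ and $\Piq$ is the $L^2$ projection onto $Q_h$, one has $\intTd{u_h\,\Piq\psi} = \intTd{u_h\,\psi}$ and $\intTd{u_h\,\partial_t\Piq\phi} = \intTd{u_h\,\partial_t\phi}$, so
\[
\int_0^{t_n}\intTd{\Piq\phi\,D_t u_h}\dt = \left[\intTd{u_h\phi}\right]_{t=0}^{t=\tau} - \int_0^{\tau}\intTd{u_h\,\partial_t\phi}\dt + R_1,
\]
where $R_1$ collects the three corrections from Lemma \ref{lem-td} together with the extension of $[0,t_n]$ to $[0,\tau]$. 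Each correction is bounded uniformly by $\TS\,\norm{\phi}_{W^{2,\infty}}$, so $|R_1|\aleq \TS \aleq h^{4/3}$ by \eqref{assumption-h-dt}.

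For the flux, direct algebra from \eqref{EC-flux} yields $f_\sigma^{EC}(u_h) = \avs{f(u_h)} - \frac{1}{12}\jump{u_h}^2$, and the telescoping identity $\avs{f}\jump{\varphi} + \jump{f}\avs{\varphi} = \jump{f\varphi}$ combined with the periodicity of $\tor$ gives
\[
\intfacesint{\avs{f(u_h)}\jump{\Piq\phi}} = -\intfacesint{\jump{f(u_h)}\avs{\Piq\phi}}.
\]
An elementary cell-by-cell Green identity yields $\intTd{f_h\cdot\nabla_x\phi} = -\intfacesint{\jump{f(u_h)}\,\phi}$, and replacing $\phi|_\sigma$ by $\avs{\Piq\phi}_\sigma$ introduces a per-face error of order $h^2\norm{\phi}_{W^{2,\infty}}$. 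Since $|\jump{f(u_h)}|=|\avs{u_h}\jump{u_h}|\aleq|\jump{u_h}|$ under \eqref{assumption}, summing this error in time over all faces is $\order(h^{4/3})$ after invoking \eqref{UB1}.

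The main obstacle is bounding the three remaining residual face-integrals of the form $\int_0^{t_n}\intfacesint{X_\sigma\jump{\Piq\phi}}\dt$ with $|\jump{\Piq\phi}|\aleq h\norm{\nabla\phi}_\infty$, for $X_\sigma\in\{\frac{1}{12}\jump{u_h}^2,\ D_\sigma^{New}\jump{u_h},\ D_{\sigma,\hat\sigma}\jump{u_h}_{\hat\sigma}\}$. By Lemma \ref{es-D}, \eqref{es-D-1} and the CFL constraint $\TS/h\leq h^{1/3}\leq 1$, one has $D_\sigma^{New}\aleq |\jump{u_h}|+1$ and $|D_{\sigma,\hat\sigma}|\aleq h^{1/3}$. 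The key quantitative input is H\"older's inequality combined with \eqref{UB1} and the elementary fact $\intfacesint{1}\aleq h^{-1}$, which gives
\[
\int_0^\tau\intfacesint{|\jump{u_h}|}\dt \aleq h^{-2/3},\qquad \int_0^\tau\intfacesint{\jump{u_h}^2}\dt \aleq h^{-1/3}.
\]
Multiplying by the $O(h)$ factor $\jump{\Piq\phi}$ (and by the extra $h^{1/3}$ gain for the cross-diffusion coefficient $D_{\sigma,\hat\sigma}$) makes each residual $\order(h^{1/3})$; this is the dominant contribution and therefore determines the final rate. Collecting $R_1$, the EC-reconstruction error, and these diffusion residuals yields the claim with $C_u$ depending only on $T$ and $\norm{\phi}_{W^{2,\infty}}$.
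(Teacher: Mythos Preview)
Your argument is correct and reaches the same $h^{1/3}$ rate through the same mechanism as the paper: Lemma~\ref{lem-td} for the time derivative, and H\"older against the weak-BV estimate \eqref{UB1} for the face residuals. The only real difference is in how the spatial flux term is organised. The paper invokes \cite[Theorem~3.1]{Lukacova-Yuan} to write
\[
\intTauTd{f_h\cdot\nabla_x\phi}-\int_0^{t_n}\!\intfacesint{f_\sigma^{GRP,New}\jump{\Piq\phi}}\dt=E_x^1+E_x^2+E_x^3,
\]
and then bounds the lumped quantity $|\avs{f_h}-f_\sigma^{GRP,New}|\aleq(1+\TS/h)\,|\jump{u_h}|$ in $E_x^2$ directly. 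You instead derive the same identity by hand (telescoping plus the cellwise Green formula) and then split $\avs{f_h}-f_\sigma^{GRP,New}$ via \eqref{flux-new} and the algebraic relation $f_\sigma^{EC}(u_h)=\avs{f(u_h)}-\tfrac{1}{12}\jump{u_h}^2$, treating the $\tfrac{1}{12}\jump{u_h}^2$, $D_\sigma^{New}\jump{u_h}$ and $D_{\sigma,\hat\sigma}\jump{u_h}_{\hat\sigma}$ pieces separately. This buys you a self-contained proof (no external citation) and, through the observation $\int_\sigma(\phi-\avs{\Piq\phi})\ds=O(|\sigma|h^2)$, a sharper $h^{4/3}$ bound on what the paper calls $E_x^1$; however this does not improve the final rate, which in both arguments is fixed by the $O(1)$ part of $D_\sigma^{New}$ paired with $h\int_0^\tau\!\intfacesint{|\jump{u_h}|}\dt\aleq h^{1/3}$. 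One small omission: you should also record the tail term $\int_{t_n}^{\tau}\!\intTd{f_h\cdot\nabla_x\phi}\dt$ (the paper's $E_x^3$) when passing from the discrete sum over $[0,t_n]$ to the continuous integral over $[0,\tau]$; it is of order $\TS$ and harmless.
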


\begin{proof} Analogously to \cite[Section 10.3]{Feireisl-Lukacova-Mizerova-She:2021} we prove \eqref{cf-eq} in two steps. Let $\tau \in [t_n, t_{n+1})$.

	\paragraph{Step 1 - time derivative term:} Applying Lemma \ref{lem-td} we obtain
\begin{align}
&\left[ \intTd{\phi u_h} \right]_{t=0}^{t=\tau} - \int_0^{t_n} \intTd{ \phi D_t u_h  }\dt - \intTauTd{ u_h \partial_t \phi }  \br
=& \int_0^{t_n} \intTd{\left( D_t \phi(t-\TS) - \partial_t \phi \right) \, u_h} \dt +  \int_{t_n}^{t_{n+1}} \intTd{u_h(t_n) \frac{\phi(\tau) -  \phi(t-\TS)}{\TS} }\dt\br
&\quad  - \int_0^{\TS} \intTd{u_h(t_0) \frac{\phi(t_0)  - \phi(t-\TS) }{\TS} }\dt -  \int_{t_n}^{\tau}\intTd{ u_h \partial_t \phi }  \dt\br
=:& \sum_{i=1}^4 E_t^{i}(u_h).
\end{align}
Further, we estimate \RevTwo{$E_t^{i}(u_h), i=1,2,3,4,$} with
\begin{align*}
& \abs{E_t^{1}(u_h)} \aleq \TS \norm{\phi}_{W^{2,2}(0,T;L^1(\tor))}, \ \abs{E_t^{2}(u_h)} +  \abs{E_t^{3}(u_h)} +\abs{E_t^{4}(u_h)} \aleq \TS \norm{\phi}_{W^{1,2}(0,T;L^1(\tor))}.
\end{align*}

	\paragraph{Step 2 - convective terms:}
	According to \cite[Theorem 3.1]{Lukacova-Yuan} we have
	\begin{subequations}
	\begin{align}
	& \intTauTd{ r_h \cdot \nabla_x \phi } - \int_0^{t_n}\intfacesint{ r_{\sigma}\jump{\Piq \phi}} \dt= \sum_{i=1}^3 E_{i}(r_h),\quad r_h \in Q_h,\\
	&  E_x^{1}(r_h) = -\int_0^{\tau} \intfacesint{ \jump{r_h} (\phi-\avs{\Piq \phi})} \dt,\\
	& E_x^{2}(r_h) =  \int_0^{\tau}\intfacesint{\Big(\avs{r_h}-r_{\sigma}\Big) \jump{\Piq \phi}}\dt,\\
	& E_x^{3}(r_h) =   \int_{t_n}^{\tau}\intTd{ r_h \cdot \nabla_x \phi }\dt.
	\end{align}
	\end{subequations}
	 Further, applying the interpolation error estimates and \eqref{assumption} we get the following estimate \RevTwo{of $\abs{E_x^i(r_h)}, i=1,2,3,$ with $r_h = f_h$ and $r_{\sigma} = f_{\sigma}^{GRP,New}$}:
\begin{align*}
\abs{E_x^1(f_h)} &  \aleq h \norm{\nabla_x \phi}_{L^{\infty}((0,\tau)\times\tor)} \int_0^{\tau} \intfacesint{ \abs{\jump{u_h}} \abs{\avs{u_h}} } \dt \aleq h  \int_0^{\tau} \intfacesint{ \abs{\jump{u_h}} } \dt  \br
& \aleq  h\left(  \int_0^{\tau} \intfacesint{\abs{\jump{u_h}}^3} \dt \right)^{1/3} \left(  \int_0^{\tau} \intfacesint{1^{3/2}} \dt \right)^{2/3} \br
& \aleq  h\left(  \int_0^{\tau} \intfacesint{\abs{\jump{u_h}}^3} \dt \right)^{1/3} h^{-2/3}\aleq h^{1/3}, \br
\abs{E_x^{2}(f_h)} & \aleq h  \norm{\nabla_x \phi}_{L^{\infty}((0,\tau)\times\tor)} \left(1+\frac{\TS}{h} \right)   \int_0^{\tau}\intfacesint{\abs{\jump{u_h}}}\dt \aleq h^{1/3}, \br
\abs{E_x^{3}(f_h)} & \aleq \TS \norm{\nabla_x \phi}_{L^{\infty}((0,\tau)\times\tor)}.
\end{align*}
\RevTwo{Note that we have used $ \int_0^{\tau} \intfacesint{\abs{\jump{u_h}}^3} \dt \aleq 1$, cf. Lemma \ref{lem-UB} in the last inequality of the estimates of $\abs{E_x^1(f_h)}$. }

Finally, \RevTwo{applying the fact that $u_h$ is the numerical solution of the stabilized GRP method \eqref{scheme}, i.e.
\begin{align*}
\int_0^{t_n} \intTd{ \phi D_t u_h  }\dt = \int_0^{t_n} \intTd{ \Piq\phi D_t u_h  }\dt = \int_0^{t_n}\intfacesint{ f_{\sigma}^{GRP,New}\jump{\Piq \phi}} \dt
\end{align*}}
we complete the proof with $e_u = \sum_{i=1}^4 E_t^{i}(u_h)-\sum_{i=1}^3 \RevTwo{E_x^{i}(f_h)}$.	
\end{proof}

The consistency formulation of the entropy inequality \eqref{EI} can be proved in an analogous way as Lemma \ref{consistency-1}. We leave the details for interested readers.
\begin{Lemma}[Consistency II]\label{consistency-2}
Let $u_h$ be the numerical solution of the \RevTwo{stabilized} GRP method \eqref{scheme}. Let assumption \eqref{assumption} and
\begin{align}\label{assumption-h-dt-1}
(\TS,h) \in (0,1]^2, \quad \TS \leq C_2 \, h^{p},  \quad p > 4/3,  \quad  C_2 \mbox{ is a constant independent of } \TS \mbox{ and } h.
\end{align}
hold.

Then it holds for all $\tau \in (0,T)$ and all $\varphi \in W^{2,\infty}((0,T)\times \tor), \varphi \geq 0$
\begin{equation}\label{cf-entropy}
	\left[ \intTd{ \eta_h \varphi }  \right]_{t=0}^{t=\tau} \leq \intTauTdB{ \eta_h \partial_t \varphi + q_h \cdot \nabla_x \varphi} + e_{\eta}(\varphi,h,\tau),\quad q_h = q(u_h).
	\end{equation}
with the consistency error $e_\eta$ satisfying
\begin{align}
\abs{e_{\eta}(\varphi,h,\tau)} \leq C_{\eta} (h^{1/3}+\TS h^{-4/3}).
\end{align}
Here the constant $C_{\eta}$ depends solely on $T, \norm{\phi}_{W^{2,\infty}((0,T)\times \tor)}$.		
\end{Lemma}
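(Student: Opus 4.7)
The strategy is to mirror the two-step argument of Lemma~\ref{consistency-1}, but start now from the discrete entropy inequality \eqref{EI} of Corollary~\ref{lem-EI} in place of the scheme itself. Fix $\tau\in[t_n,t_{n+1})$ and choose the nonnegative test function $\varphi_h:=\Piq\varphi$ in \eqref{EI}; integrating over $(0,t_n)$ yields
$$\int_0^{t_n}\intTd{\Piq\varphi\,D_t\eta_h}\dt-\int_0^{t_n}\intfacesint{q_\sigma^{GRP,New}\jump{\Piq\varphi}}\dt\;\aleq\;\frac{\TS}{h^{4/3}}\,\norm{\varphi}_{L^\infty}.$$
This is precisely the source of the $\TS h^{-4/3}$ contribution to the consistency error, and it is the only structural difference from the proof of Lemma~\ref{consistency-1}.

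For the temporal part I would apply Lemma~\ref{lem-td} with $r_h=\eta_h$ and $\phi=\Piq\varphi$ verbatim as in Step~1 of Lemma~\ref{consistency-1}, producing the target term $\big[\intTd{\eta_h\varphi}\big]_0^\tau-\intTauTd{\eta_h\partial_t\varphi}$ plus four remainders, each of order $\TS\norm{\varphi}_{W^{2,\infty}}$ since $|\eta_h|\leq\Ov{u}^2/2$ by \eqref{assumption}. For the spatial part, the identity of \cite[Theorem~3.1]{Lukacova-Yuan} with $r_h=q_h$ and $r_\sigma=q_\sigma^{GRP,New}$ converts the face-integral into $\intTauTd{q_h\cdot\nabla_x\varphi}$ modulo three remainders $E_x^i(q_h,\varphi)$, $i=1,2,3$. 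The terms $E_x^1$ and $E_x^3$ are handled exactly as in Lemma~\ref{consistency-1}: using $|\jump{q_h}|=|\avs{u_h^2}\jump{u_h}|\leq\Ov{u}^2|\jump{u_h}|$ together with Lemma~\ref{lem-UB} and H\"older's inequality yields a contribution of order $h^{1/3}$.

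The genuinely new step is the bound on $E_x^2=\int_0^\tau\intfacesint{(\avs{q_h}-q_\sigma^{GRP,New})\jump{\Piq\varphi}}\dt$. A short algebraic manipulation exploiting $q_h+\psi_h=u_h f_h$, the product identity $\avs{ab}=\avs{a}\avs{b}+\tfrac14\jump{a}\jump{b}$, the definition of $q_\sigma^{GRP,New}$ in Lemma~\ref{lem-EB}, and the decomposition \eqref{flux-new} together with \eqref{EC-flux} gives
$$\avs{q_h}-q_\sigma^{GRP,New}\;=\;\tfrac{1}{3}\avs{u_h}\jump{u_h}^2+\avs{u_h}\Big(D_\sigma^{New}\jump{u_h}_\sigma+\sum_{j\neq i}D_{\sigma,\hat\sigma}\jump{u_h}_{\hat\sigma}\Big).$$
Combining \eqref{assumption-1} with Lemma~\ref{es-D}, this expression is $\order\big((1+\TS/h)|\jump{u_h}|+|\jump{u_h}|^2\big)$; together with $|\jump{\Piq\varphi}|\leq h\norm{\nabla_x\varphi}_{L^\infty}$ and the cubic weak BV estimate of Lemma~\ref{lem-UB}, H\"older's inequality delivers $|E_x^2|\aleq h^{1/3}+\TS h^{-2/3}$. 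Under \eqref{assumption-h-dt-1} all contributions are dominated by $h^{1/3}+\TS h^{-4/3}$, giving the claimed bound. The principal obstacle is the algebraic identity above: one has to recognise that, after cancelling the entropy-conservative Godunov part of the flux, every remaining piece carries at least one factor of $\jump{u_h}$, which is the only mechanism by which the $L^3$ weak BV bound of Lemma~\ref{lem-UB} can absorb the $\TS/h$ growth coming from the GRP correction and the artificial-viscosity stabilizer.
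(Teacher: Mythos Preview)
Your proposal is correct and follows precisely the route the paper itself indicates: the paper does not give a detailed proof of Lemma~\ref{consistency-2} but simply states that it ``can be proved in an analogous way as Lemma~\ref{consistency-1}'', and your argument is exactly this analogue, with the discrete entropy inequality \eqref{EI} replacing the scheme \eqref{GRP-2} and producing the additional $\TS h^{-4/3}$ term. Your algebraic identity for $\avs{q_h}-q_\sigma^{GRP,New}$ is correct (indeed $\tfrac{1}{12}+\tfrac14=\tfrac13$ after combining $\avs{f_h}-f_\sigma^{EC}(u_h)=\tfrac1{12}\jump{u_h}^2$ with the product rule), and it makes transparent why every term in $E_x^2$ carries a jump factor---which is the only point that is not already written out in the paper.
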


\subsection{Convergence}\label{sec-convergence}
We are now ready to prove the main results.
To be precise we recall the definition of the weak and measure-valued solution of the Burgers equation \eqref{PDE}. 

\begin{Definition}
\rm ({\bf Weak solution}) 
\label{def:DW}
We say that $u\in L^{\infty}((0,T)\times\tor)$  is a weak solution of the Burgers equation \eqref{PDE} if the following hold for any  $\tau \in (0,T)$:
%
\begin{align}\label{eq:DWsolution}
\left[\intTd{ u(t,\cdot)\phi} \right]_{t = 0}^{t=\tau}
& = \intTauTdB{ u \partial_t  \phi +   f(u) \cdot \Grad  \phi}
\end{align}
for all $\phi \in W^{1,\infty}((0,T)\times \tor)$ and

	\begin{align}
	\left[\intTd{ \eta(u(t,\cdot))\varphi} \right]_{t = 0}^{t=\tau}
& \leq \intTauTdB{ \eta(u) \partial_t  \varphi +   q(u) \cdot \Grad  \varphi}
	\end{align}
	for all $\varphi \in W^{1,\infty}((0,T)\times \tor), \varphi \geq 0$.
\end{Definition}

In order to show the convergence of newly developed GRP scheme we need the following result due to DiPerna on characterisation of the measure-valued solution of the Burgers equation.

\begin{Definition}\label{def_MVS}
\rm ({\bf Measure-valued solution}) 
A space-time parametrized probability measure, the Young measure, \RevOne{$\mathcal{V}_{t,x}: (t,x)\in (0,T)\times \R \to \mathcal{P}(\R)$} is called an admissible
measure valued solution of \eqref{PDE} if the following holds for any $\tau \in (0,T)$
$$
\left[ \int_{\tor} \langle \mathcal{V}_{t,x}; \tilde{u}(t, \cdot) \rangle \phi (t) \dx \right]_{t=0}^{t=\tau}
= \int_{0}^\tau  \int_{\tor}  \langle \mathcal{V}_{t,x}; \tilde{u} \rangle \partial_t \phi (t) \mbox{d} x \,\mbox{d} t
+ \int_{0}^\tau \int_{\tor} \langle \mathcal{V}_{t,x}; f(\tilde{u}) \rangle \cdot \nabla_x \phi  \, \mbox{d} x \, \mbox{d} t
$$
for all $\phi \in W^{1,\infty}((0,T)\times \tor)$ and
$$
\left[\intTd{\langle \mathcal{V}_{t,x}; {\eta}(\tilde{u}(t,\cdot)) \rangle \varphi} \right]_{t = 0}^{t=\tau}
\leq \intTauTdB{\langle \mathcal{V}_{t,x}; {\eta}(\tilde{u}) \rangle \partial_t  \varphi +  \langle \mathcal{V}_{t,x}; {q} (\tilde{u})   \rangle \cdot \Grad  \varphi}
$$
for all $\varphi \in W^{1,\infty}((0,T)\times \tor), \varphi \geq 0$.
\RevOne{We note that $\tilde{u}$ denotes the independent variable on which the measure $\mathcal{V}_{t,x}$ acts. }
\end{Definition}

%
%

We proceed by recalling the following result of DiPerna~\cite{DiPerna:1985a} on the characterisation of the Young measure.

\begin{Theorem}\cite[Theorem~4.1 and 4.2]{DiPerna:1985a}
\label{MVS}
Let $u_0 \in L^{\infty}(\tor)$ and let $\mathcal{V}_{t,x}$ be admissible measure-valued solution in the sense of Definition~\ref{def_MVS}.
Assume that
\begin{itemize}
\item[i)] there exists a constant $C>0$ such that for almost all $t \in [0,T]$
$$
\int_{\tor} \langle \mathcal{V}_{t,x}; | \tilde{u}| \rangle \mbox{d} x \leq C;
$$
\item[ii)]
$$
\lim_{t\to 0} \frac 1 t  \int_0^t \int_{\tor} \langle  \mathcal{V}_{s,x}; \tilde{u} \rangle  \phi(x) \mbox{d} x \mbox{d}s
= \int_{\tor}  {\tilde{u}}_0  \phi(x)\mbox{d} x
$$
for all $\phi \in C^1(\tor);$
\item[iii)]
$$
\lim_{t\to 0} \frac 1 t  \int_0^t \int_{\tor} \langle  \mathcal{V}_{s,x}; \tilde{\eta} \rangle \mbox{d} x \mbox{d}s
\leq \int_{\tor} \eta(u_0(x)) \mbox{d} x
$$
holds for one strictly convex function $\eta: \Bbb R \to \Bbb R$ with $\eta(0) = 0.$
\end{itemize}
Then it holds
$$
\mathcal{V}_{t,x} = \delta_{u(t,x)} \qquad \mbox{ for a.a } (t,x) \in [0,T]\times\tor.
$$
\end{Theorem}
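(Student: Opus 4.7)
This is DiPerna's concentration theorem for admissible measure-valued solutions; the plan is to reproduce his classical argument. Introduce the barycentre $\bar u(t,x) := \langle \mathcal{V}_{t,x};\tilde u\rangle$ and the pointwise entropy defect
$$
D(t,x) := \langle \mathcal{V}_{t,x};\eta(\tilde u)\rangle - \eta(\bar u(t,x)),
$$
which is non-negative by Jensen's inequality applied to the convex $\eta$. Strict convexity of $\eta$ supplies the essential rigidity: $D(t_0,x_0) = 0$ forces the probability measure $\mathcal{V}_{t_0,x_0}$ to be supported at the single point $\bar u(t_0,x_0)$. The whole problem therefore reduces to establishing the reverse bound $\int_{\tor} D(\tau,x)\,\mathrm{d}x \leq 0$ for almost all $\tau\in(0,T)$.

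Half of this reverse bound comes essentially for free from the entropy inequality: testing the measure-valued entropy inequality in Definition~\ref{def_MVS} with $\varphi\equiv 1$ and combining with assumption (iii) yields
$$
\intTd{\langle \mathcal{V}_{\tau,x};\eta(\tilde u)\rangle} \leq \lim_{t\to 0^+}\frac{1}{t}\int_0^t\intTd{\langle \mathcal{V}_{s,x};\eta(\tilde u)\rangle}\,\mathrm{d}s \leq \intTd{\eta(u_0)}.
$$
The other half is the main obstacle: one needs a matching \emph{lower} bound $\intTd{\eta(\bar u(\tau,\cdot))} \geq \intTd{\eta(u_0)}$, or, equivalently, the identification of $\bar u$ with the unique Kru\v{z}kov entropy solution $u$ of \eqref{PDE}--\eqref{In}. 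A priori, testing the measure-valued formulation with the identity function $\tilde u$ only yields a PDE for $\bar u$ with the spurious effective flux $\langle \mathcal{V};f(\tilde u)\rangle$ in place of $f(\bar u)$, and removing this flux defect is the delicate point.

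To resolve it, I would invoke compensated compactness with the Tartar family of Kru\v{z}kov-type entropy pairs $(|\tilde u - c|,\,\mathrm{sgn}(\tilde u - c)(f(\tilde u)-f(c)))_{c\in\R}$: the uniform moment bound (i) supplies the $H^{-1}_{\mathrm{loc}}$-compactness of the associated entropy productions needed for the div-curl lemma, which in turn delivers the Murat--Tartar commutation identity for $\mathcal{V}_{t,x}$. Genuine non-linearity of the Burgers flux $f(u)=u^2/2$ then forces $\mathcal{V}_{t,x}$ to be a point mass. An equivalent route is to adapt Kru\v{z}kov's doubling of variables directly to the Young-measure setting, using the initial-trace condition (ii) to match $\bar u(0,\cdot) = u_0$ and the $L^1$-contraction to identify $\bar u$ with $u$. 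Either way, combining the two bounds gives $\int_{\tor} D(\tau,\cdot)\,\mathrm{d}x \leq 0$, hence $D \equiv 0$, and strict convexity concludes $\mathcal{V}_{t,x} = \delta_{u(t,x)}$ for almost every $(t,x)$.
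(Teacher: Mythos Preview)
The paper does not give its own proof of this statement: Theorem~\ref{MVS} is quoted verbatim from DiPerna~\cite{DiPerna:1985a} and used as a black box in the proof of Theorem~\ref{thm-weak}. There is therefore no in-paper argument to compare your proposal against.

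On the merits of the proposal itself, the principal route you outline---compensated compactness via the div--curl lemma---does not apply in this setting. The div--curl lemma and the resulting Tartar commutation relation are statements about weak limits of \emph{sequences}; here you are handed an abstract Young measure $\mathcal{V}_{t,x}$ satisfying two distributional relations, with no generating sequence in sight, so there is nothing to feed into the div--curl machinery. Assumption~(i) is an $L^1$-moment bound on the measure, not an $H^{-1}$-compactness statement for any family of entropy productions. Moreover, Definition~\ref{def_MVS} supplies only \emph{one} entropy pair (besides the trivial pair coming from the equation itself), which would in any case be too little to run the Tartar reduction argument that relies on the full Kru\v{z}kov family.

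Your secondary suggestion---adapt Kru\v{z}kov's doubling of variables to compare $\mathcal{V}$ directly with the unique entropy solution $u$---is in fact DiPerna's actual strategy, but your write-up conflates it with a comparison of the barycentre $\bar u$ against $u$. The latter cannot work: $\bar u$ satisfies a conservation law with effective flux $\langle\mathcal{V};f(\tilde u)\rangle$, not $f(\bar u)$, so $L^1$-contraction for entropy solutions gives no information about $\bar u$. DiPerna instead controls the \emph{measure-valued} relative entropy $\int_{\tor}\langle\mathcal{V}_{t,x};\eta(\tilde u)-\eta(u)-\eta'(u)(\tilde u-u)\rangle\,\mathrm{d}x$ (or, in the Theorem~4.1 version, $\int_{\tor}\langle\mathcal{V}_{t,x};|\tilde u-u(t,x)|\rangle\,\mathrm{d}x$) by doubling variables between $\mathcal{V}$ and $u$; assumptions~(ii) and~(iii) force this quantity to vanish at $t=0$, and the evolution inequality keeps it at zero. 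Strict convexity of the single entropy $\eta$ then yields $\mathcal{V}_{t,x}=\delta_{u(t,x)}$. Your entropy-defect quantity $D$ is essentially this relative entropy with $u$ replaced by $\bar u$; replacing $\bar u$ by the genuine entropy solution $u$ is what makes the argument close.
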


This establishes
sufficient conditions which guarantee that an approximating sequence
contains a subsequence which converges to the unique Kru\v{z}kov weak, entropy solution.

\begin{Theorem}[Weak convergence of the GRP scheme]\label{thm-weak}
Let $\{u_h\}_{h\searrow 0}$ be the family of numerical solutions obtained by the \RevTwo{stabilized} GRP method \eqref{scheme}. Let assumptions \eqref{assumption} and \eqref{assumption-h-dt-1} hold.

Then, up to a subsequence, the GRP solution $\{u_{h}\}$ converges for $h\to 0$ in the following sense
\begin{equation}
u_{h}  \longrightarrow \  u \quad \mbox{weakly-}(*) \ \mbox{in} \ L^{\infty}((0,T)\times\tor)),
\end{equation}
where \RevTwo{$u$ is the unique}  weak entropy solution of the Burgers equation, cf.~Definition \ref{def:DW}.
\end{Theorem}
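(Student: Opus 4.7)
The plan is to combine the uniform $L^\infty$ bound from \eqref{assumption} with the two consistency lemmas to generate an admissible measure-valued solution in the sense of Definition~\ref{def_MVS}, and then to invoke DiPerna's Theorem~\ref{MVS} to collapse this measure to a Dirac mass supported at a weak entropy solution.

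First, since \eqref{assumption} bounds the family $\{u_h\}$ uniformly in $L^\infty((0,T)\times\tor)$, the Banach--Alaoglu theorem together with the fundamental theorem of Young measures yields a (non-relabelled) subsequence, a function $u \in L^\infty((0,T)\times\tor)$, and a space-time parametrized probability measure $\mathcal{V}_{t,x}$ on $\R$ such that
\begin{equation*}
u_h \longrightarrow u \text{ weakly-(*) in } L^\infty, \qquad g(u_h) \longrightarrow \langle \mathcal{V}_{t,x}; g(\tilde u)\rangle \text{ weakly-(*) in } L^\infty
\end{equation*}
for every $g \in C(\R)$. I will use this in particular with $g(u)=u$, $f(u)=u^2/2$, $\eta(u)=u^2/2$ and $q(u)=u^3/3$.

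Next, I pass to the limit $h \to 0$ in Lemmas~\ref{consistency-1} and~\ref{consistency-2}. The crucial observation is that under the tightened CFL scaling \eqref{assumption-h-dt-1} with $p>4/3$, the two consistency errors $O(h^{1/3})$ and $O(h^{1/3}+\TS\, h^{-4/3})$ both vanish. Since the scheme is initialised by $u_h(0,\cdot)=\Piq u_0 \to u_0$ strongly in every $L^r$, $r<\infty$, the limit of \eqref{cf-eq} and \eqref{cf-entropy} reads exactly the measure-valued weak formulation and entropy inequality of Definition~\ref{def_MVS}, so $\mathcal{V}_{t,x}$ is an admissible measure-valued solution of the Burgers equation with initial data $u_0$.

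To apply Theorem~\ref{MVS} I verify the three auxiliary hypotheses. Condition~(i) is immediate from $\langle \mathcal{V}_{t,x};|\tilde u|\rangle \leq \Ov{u}$ a.e., inherited from \eqref{assumption}. For (ii) and (iii) I test the measure-valued formulations with space-only test functions $\phi(x)$ together with a cut-off in time near $t=0$; the strong convergence $\Piq u_0 \to u_0$ and the uniform $L^\infty$ bound let me pass to the limit in the $t=0$ trace and average in time, identifying $\mathcal{V}_{0,x}$ with $\delta_{u_0(x)}$. This immediately gives (ii) and, since $\eta(u)=u^2/2$ is strictly convex with $\eta(0)=0$, also (iii). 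Theorem~\ref{MVS} then yields $\mathcal{V}_{t,x}=\delta_{u(t,x)}$ almost everywhere, whence $u$ satisfies Definition~\ref{def:DW}; Kru\v{z}kov's uniqueness theorem upgrades the subsequential convergence to convergence of the full family and identifies $u$ as the unique weak entropy solution.

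The anticipated main obstacle is precisely the passage to the limit in the entropy inequality: because the consistency error in Lemma~\ref{consistency-2} carries the term $\TS\, h^{-4/3}$, the CFL scaling $\TS \lesssim h^{4/3}$ which sufficed for the a priori estimate in Lemma~\ref{lem-UB} must be strengthened to $\TS \lesssim h^p$ with $p>4/3$, and this is exactly why assumption~\eqref{assumption-h-dt-1} is imposed in the statement.
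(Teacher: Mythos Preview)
Your proposal is correct and follows essentially the same route as the paper: extract a Young measure from the uniform $L^\infty$ bound, pass to the limit in the two consistency lemmas (using that \eqref{assumption-h-dt-1} with $p>4/3$ kills the $\TS\,h^{-4/3}$ error), and invoke DiPerna's Theorem~\ref{MVS} to collapse the Young measure to a Dirac. The paper delegates the verification of hypotheses (i)--(iii) of Theorem~\ref{MVS} to \cite[Theorems~3.8 and~3.10]{Kroner-Noelle-Rokyta:1995}, whereas you sketch the argument directly; your sketch is adequate, and your closing remark that Kru\v{z}kov uniqueness upgrades the subsequential limit to the full sequence is a correct (if slightly stronger) conclusion than what the theorem actually claims.
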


\begin{proof}
Applying the fundamental theorem on Young measure \cite{Ball:1989} we obtain from \eqref{assumption} that, up to a subsequence, there exist a parametrized probability measure $\{\mathcal{V}_{t,x}\}_{(t,x)\in(0,T)\times \tor}$ such that
\begin{align}
& u_h  \longrightarrow \ \langle \mathcal{V}_{t, x};\, \tilde{u} \rangle  \quad \mbox{weakly-}(*) \ \mbox{in} \ L^{\infty}((0,T)\times\tor), \\
& (f,\eta,q)(u_h)  \longrightarrow \ \langle \mathcal{V}_{t, x};\, {(f,\eta,q)(\tilde{u})} \rangle \quad \mbox{weakly-}(*) \ \mbox{in} \ L^{\infty}((0,T)\times\tor;\R^3).
\end{align}
Further, thanks to the consistency formula, cf.~Lemmas \ref{consistency-1} and \ref{consistency-2}, we know that the Young measure $\{\mathcal{V}_{t,x}\}_{(t,x)\in(0,T)\times \tor}$ satisfies
\begin{align}\label{DM-1}
\left[\intTd{ \langle \mathcal{V}_{t, x};\, \tilde{u} \rangle \, \phi} \right]_{t = 0}^{t=\tau}
& = \intTauTdB{ \langle \mathcal{V}_{t, x};\, \tilde{u} \rangle \, \partial_t  \phi +   \langle \mathcal{V}_{t, x};\, f(\tilde{u}) \rangle \cdot \Grad  \phi} \\ \label{DM-2}
 \left[\intTd{ \langle \mathcal{V}_{t, x};\, \eta(\tilde{u}) \rangle \, \varphi} \right]_{t = 0}^{t=\tau}
& \leq \intTauTdB{ \langle \mathcal{V}_{t, x};\, \eta(\tilde{u}) \rangle \, \partial_t  \varphi +   \langle \mathcal{V}_{t, x};\, q(\tilde{u}) \rangle \cdot \Grad  \varphi}
\end{align}
for all $\phi \in W^{1,\infty}((0,T)\times \tor)$, $\varphi \in W^{1,\infty}((0,T)\times \tor), \varphi \geq 0$. Consequently, we have a weak convergence of our numerical solution $\{ u_h \}_{h \searrow 0}$ to a measure-valued solution $\mathcal{V}_{t,x}.$ Now, in order to obtain the convergence to a unique weak entropy solution we apply Theorem~\ref{MVS}. Indeed,  following  \cite[Theorems 3.8 and 3.10]{Kroner-Noelle-Rokyta:1995}  we can analogously verify that all properties of Theorem~\ref{MVS} hold also for the GRP scheme. Consequently, we obtain that $\mathcal{V}_{t,x} = \delta_{u(t,x)}$ and we have shown that up to a subsequence our numerical solutions converge  to a weak entropy solution. This concludes the proof.
\end{proof}

Further, applying the weak-strong uniqueness principle, analogously as \cite[Section 7]{Feireisl-Lukacova-Mizerova-She:2021} we obtain the following strong convergence results.
\begin{Theorem}[Strong convergence]\label{thm-strong}
Let $\{u_h\}_{h\searrow 0}$ be the family of numerical solutions obtained by the \RevTwo{stabilized} GRP method \eqref{scheme}. Let assumptions \eqref{assumption} and \eqref{assumption-h-dt-1} hold.

Then the following holds:
\begin{itemize}
\item {\bf Classical solution}  \quad  Let $u\in C^1([0,T]\times \tor)$. Then $u$ is a classical solution to the Burgers equation  \eqref{PDE} with initial data $u_0\in C^1(\tor)$, and
\begin{equation*}
u_{h}  \longrightarrow \  u \quad \mbox{in} \ L^{p}((0,T)\times \tor)
\end{equation*}
for any $p\in[1,\infty)$.

\item {\bf Strong solution}  \quad Supposed that the Burgers equation \eqref{PDE} admits a strong solution $u$ in the class $W^{1,\infty}((0,T)\times \tor)$ emanating from initial data $u_0\in W^{1,\infty}(\tor)$. Then it holds
\begin{equation*}
u_{h}  \longrightarrow \  u \quad \mbox{in} \ L^{p}((0,T)\times \tor) 
\end{equation*}
for any $p\in[1,\infty)$.
\end{itemize}
\end{Theorem}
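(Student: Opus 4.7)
The strategy is to combine the weak convergence of Theorem~\ref{thm-weak} with a weak-strong uniqueness argument and a strict-convexity upgrade, following the template of \cite[Section~7]{Feireisl-Lukacova-Mizerova-She:2021}. First, I would apply Theorem~\ref{thm-weak} to extract a subsequence, still denoted $\{u_h\}$, converging weakly-$(*)$ in $L^{\infty}((0,T)\times\tor)$ to a weak entropy solution $v$ of \eqref{PDE} with the initial datum $u_0$. The Young-measure machinery and the Dirac-mass identification packaged in Theorem~\ref{MVS} also give $u_h^2 = 2\eta(u_h) \rightharpoonup 2\eta(v) = v^2$ weakly-$(*)$ in $L^{\infty}$.

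Second, I would identify $v$ with the prescribed solution $u$. In the classical case, $u\in C^1([0,T]\times\tor)$ satisfies \eqref{PDE} pointwise and is therefore a weak entropy solution in the sense of Definition~\ref{def:DW}. In the strong-solution case, $u\in W^{1,\infty}((0,T)\times\tor)$ satisfies \eqref{PDE} and the entropy inequality \eqref{En} almost everywhere, hence is likewise a weak entropy solution. Since $v$ and $u$ are two weak entropy solutions with the common initial datum $u_0$, Kru\v{z}kov's uniqueness theorem \cite{Kruzkov:1970} forces $v = u$ a.e. on $(0,T)\times\tor$.

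Third, I upgrade weak to strong convergence by exploiting the strict convexity of $\eta(u)=u^2/2$. Testing the weak-$(*)$ limits $u_h \rightharpoonup u$ and $u_h^2 \rightharpoonup u^2$ against the constant function $1 \in L^{1}((0,T)\times\tor)$ yields $\int_0^T \intTd{u_h^2}\dt \to \int_0^T \intTd{u^2}\dt$ as well as $\int_0^T \intTd{u_h u}\dt \to \int_0^T \intTd{u^2}\dt$, so that
\[
\int_0^T \intTd{(u_h - u)^2}\dt \;=\; \int_0^T \intTd{u_h^2}\dt \,-\, 2\int_0^T \intTd{u_h u}\dt \,+\, \int_0^T \intTd{u^2}\dt \;\longrightarrow\; 0.
\]
The uniform bound \eqref{assumption} then upgrades $L^2$ convergence to $L^p$ convergence for any $p\in[1,\infty)$ by the interpolation $\norm{u_h-u}_{L^p}^p \leq (2\Ov{u})^{p-2}\norm{u_h-u}_{L^2}^2$. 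Uniqueness of the limit $u$ removes the subsequence selection and yields convergence of the entire family.

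\textbf{Main obstacle.} None of the individual steps is deep, as the architecture is classical for scalar laws once Theorem~\ref{thm-weak} is in place. The most delicate point is that the upgrade from weak to strong convergence hinges on having $u_h^2 \rightharpoonup u^2$ and not merely $u_h^2 \rightharpoonup \langle \mathcal{V}_{t,x};\tilde u^2\rangle$; this collapse of the Young measure to a Dirac mass in turn rests on verifying condition~(iii) of Theorem~\ref{MVS} for the quadratic entropy $\eta = u^2/2$, which is where the entropy consistency of Lemma~\ref{consistency-2} together with the $L^{\infty}$ bound \eqref{assumption} is really used. Once the Dirac reduction is secured, the strict-convexity upgrade and the final $L^p$ interpolation are routine.
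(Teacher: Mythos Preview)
Your proposal is correct and matches the paper's approach. The paper gives no detailed proof of Theorem~\ref{thm-strong} beyond the single sentence ``applying the weak-strong uniqueness principle, analogously as \cite[Section~7]{Feireisl-Lukacova-Mizerova-She:2021}'', and your plan is exactly a fleshed-out version of this: the Dirac reduction $\mathcal{V}_{t,x}=\delta_{u(t,x)}$ already established inside the proof of Theorem~\ref{thm-weak}, identification of the limit with the given classical/strong solution via Kru\v{z}kov uniqueness (which is the scalar weak--strong uniqueness principle), and the standard strict-convexity upgrade to strong $L^p$ convergence. One cosmetic point: your interpolation inequality $\norm{u_h-u}_{L^p}^p \leq (2\Ov{u})^{p-2}\norm{u_h-u}_{L^2}^2$ is written only for $p\geq 2$; for $1\leq p<2$ you would invoke H\"older on the bounded domain instead, but this is routine.
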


\

\section*{Acknowledgements}


\noindent This manuscript is dedicated to the 65th birthday of Gerald Warnecke, our colleague and close friend. We want to express our deep thank
for his continuous support over the years and
his time he invested in fruitful joint research discussions.

In addition we would like also to dedicate this paper to the memory of Jiequan Li, the founder of the GRP method. We will never forget his friendship and enthusiasm when discussing with us mathematics.

\section*{Compliance with Ethical Standards}

\paragraph{Conflict of Interest}
On behalf of all authors, the corresponding author states that there is no conflict of interest.

\paragraph{Ethics approval and informed consent}
On behalf of all authors, the corresponding author confirms that the appropriate ethics review and informed consent have been followed.

\section*{Declarations}
\paragraph{Funding}  The work of M.L. and Y.Y. was partially funded by the Gutenberg Research College
 and by Chinesisch-Deutschen Zentrum f\"ur Wissenschaftsf\"orderung\chinese{(中德科学中心)} - Sino-German project number GZ1465.
M.L. is grateful to the Mainz Institute of Multiscale Modelling and 
SPP 2410 Hyperbolic Balance Laws in Fluid Mechanics: Complexity, Scales, Randomness (CoScaRa) for supporting her research.

\paragraph{Data Availability} There is no data associated with this manuscript.


\appendix

\section{Entropy-stable factor of RP flux}\label{sec-ES-RP}
The goal of this section is to estimate the entropy-stable factor $D_{\sigma}^{RP}$, defined in \eqref{DRP}. Let $r_h = R_Vu_h$. We start by deriving the expression of $D_{\sigma}^{RP}$ with Table \ref{t-GRP}.

Let us consider a face $\sigma = K|L$. Then for $r_{K} = r_{L}$ we simply set $D_{\sigma}^{RP} = \avs{r_h} + \jump{r_h}$.  Otherwise, some calculations directly give
\begin{equation}\label{DRP1}
D_{\sigma}^{RP}(r_h)  = \begin{cases}
\left(6 \avs{r_h} - \jump{r_h} \right)/12  & \mbox{for Cases 1 and  3}, \\
-\left(6 \avs{r_h} + \jump{r_h} \right)/12 & \mbox{for Cases 2 and  5}, \\
\left( 12 \avs{r_h}^2 + \jump{r_h}^2\right) / (24\jump{r_h})  \  & \mbox{for Case 4}.
\end{cases}
\end{equation}


\begin{Lemma} \label{lem-DRP}
	It holds that  	
	\begin{equation}\label{es-DRP}
	D_{\sigma}^{RP}(r_h) \approx \abs{\avs{r_h}} +  \abs{\jump{r_h}}.
	\end{equation}
\end{Lemma}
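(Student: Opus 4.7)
The plan is to verify the equivalence $D_\sigma^{RP}(r_h) \approx |\avs{r_h}| + |\jump{r_h}|$ by a straightforward case-by-case inspection of the explicit formula \eqref{DRP1}. Since each of the five cases has a different closed-form expression, both the upper and lower bounds will come from elementary algebra together with the sign and ordering constraints on $r^{\pm}_\sigma$ that characterize each case. The trivial situation $r_K = r_L$ (where $\jump{r_h}=0$ and $D_\sigma^{RP}$ is just set by convention) is irrelevant for the stability analysis since it only enters multiplied by $\jump{r_h}$, so I will focus on the five genuine cases.

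First, I would dispatch the shock Cases~1 and 2 ($\jump{r_h}<0$, with $\avs{r_h}>0$ resp.\ $\avs{r_h}\leq 0$). In these two cases the two summands $6\avs{r_h}$ and $-\jump{r_h}$ that build $D_\sigma^{RP}$ carry the same sign, so $D_\sigma^{RP}$ rewrites as $\tfrac12|\avs{r_h}| + \tfrac{1}{12}|\jump{r_h}|$. The bounds
\[
\tfrac{1}{12}\bigl(|\avs{r_h}| + |\jump{r_h}|\bigr) \leq D_\sigma^{RP} \leq \tfrac{1}{2}\bigl(|\avs{r_h}| + |\jump{r_h}|\bigr)
\]
are then immediate.

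Next, I would handle the same-sign rarefaction Cases~3 and 5 by re-expressing everything in terms of $r^{\pm}_\sigma$. For instance, in Case~3 ($0<r^{-}_\sigma\leq r^{+}_\sigma$) the formula simplifies to $D_\sigma^{RP} = (2r^{+}_\sigma + 4 r^{-}_\sigma)/12$, which is a positive linear combination. Using the ordering $0<r^{-}_\sigma\leq r^{+}_\sigma$, both $D_\sigma^{RP}$ and $|\avs{r_h}| + |\jump{r_h}| = \tfrac12(3 r^{+}_\sigma - r^{-}_\sigma)$ are controlled from above and below by multiples of $\max(|r^{+}_\sigma|, |r^{-}_\sigma|)$, yielding the desired equivalence. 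Case~5 is symmetric.

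The main obstacle is Case~4 ($r^{-}_\sigma\leq 0 \leq r^{+}_\sigma$) because $D_\sigma^{RP}$ there has a rational form. The decisive observation is that in this sign configuration $\jump{r_h} = |r^{+}_\sigma| + |r^{-}_\sigma|$, hence
\[
|\avs{r_h}| = \tfrac12 |r^{+}_\sigma + r^{-}_\sigma| \leq \tfrac12 \jump{r_h}.
\]
Splitting
\[
D_\sigma^{RP} = \frac{\avs{r_h}^2}{2\jump{r_h}} + \frac{\jump{r_h}}{24},
\]
the first summand lies in $[0, \jump{r_h}/8]$, so $\jump{r_h}/24 \leq D_\sigma^{RP} \leq \jump{r_h}/6$. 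Combining with $\jump{r_h} \leq |\avs{r_h}| + |\jump{r_h}| \leq \tfrac{3}{2}\jump{r_h}$ closes the estimate. Collecting the constants from all five cases then gives the claimed equivalence \eqref{es-DRP}.
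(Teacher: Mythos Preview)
Your proposal is correct and follows essentially the same case-by-case strategy as the paper's proof: Cases~1--2 give the exact identity $D_\sigma^{RP}=\tfrac12|\avs{r_h}|+\tfrac1{12}|\jump{r_h}|$, Cases~3 and 5 use the ordering constraint (you via the $r^\pm_\sigma$ coordinates, the paper via $2|\avs{r_h}|>|\jump{r_h}|$), and Case~4 relies on $|\avs{r_h}|\le\tfrac12\jump{r_h}$ to bound the rational term. The only cosmetic difference is your parametrization in Cases~3/5, which yields the same bounds.
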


\begin{proof}
	Thanks to \eqref{DRP1} in what follows we prove \eqref{es-DRP} case by case.
	\begin{itemize}
		\item Consider Case 1:  $\jump{r_h} < 0, \avs{r_h} > 0$. Then it holds
		\begin{equation*}
		D_{\sigma}^{RP} = \frac12 \abs{\avs{r_h}} + \frac1{12} \abs{\jump{r_h}} > 0.
		\end{equation*}
		
		Analogously to analyze Case 2: $\jump{r_h} < 0, \avs{r_h} \leq 0$, we obtain $D_{\sigma}^{RP} = \frac12 \abs{\avs{r_h}} + \frac1{12} \abs{\jump{r_h}}$ .
		
		\item Consider Case 3:  $0 < r_{\sigma}^- < r_{\sigma}^+$. then it holds $2\avs{r_h} > \jump{r_h} \geq 0$ yielding
		\begin{equation*}
		\frac16 \abs{\avs{r_h}} + \frac1{12}  \abs{\jump{r_h}} < \frac13 \avs{r_h} < D_{\sigma}^{RP} \leq \frac12 \avs{r_h} \leq \frac12 \abs{\avs{r_h}} +  \abs{\jump{r_h}}.
		\end{equation*}
		
		Analogously to analyze Case 5: $r_{\sigma}^- < r_{\sigma}^+ < 0$, we obtain $D_{\sigma}^{RP} \approx \abs{\avs{r_h}} +  \abs{\jump{r_h}}$ .
		
		\item Consider Case 4: $r_{\sigma}^- < 0 < r_{\sigma}^+$. We have $\jump{r_h} > 2 \abs{\avs{r_h}} \geq 0$, resulting to
		\begin{align*}
		& D_{\sigma}^{RP}  \geq \frac1{24}\abs{\jump{r_h}} > \frac{1}{48} \left( \abs{\jump{r_h}} + \abs{\avs{r_h}} \right), \\
		& D_{\sigma}^{RP} < \frac16  \abs{\jump{r_h}}  \leq \frac16  \abs{\jump{r_h}} + \abs{\avs{r_h}}.
		\end{align*}
	\end{itemize}
	Altogether, we complete the proof.
\end{proof}

\end{document}